\begin{document}
\title{On the optimal control problem for the k-FORQ/MCH equation with viscosity.}
\author[Z. Wang and S.Zhang]
{ Zhao Wang and Shan Zhang }

\address{Zhao Wang\hfill\break
School of Mathematics and Statistics, Changchun University of Technology, Changchun 130012,
China}
\email{wangzhao@ccut.edu.cn}

\address{Shan Zhang\hfill\break
	School of Mathematics and Statistics, Changchun University of Technology, Changchun 130012,
	China}
\email{zhangshan9039@126.com}

\thanks{}
\subjclass[2020]{49J20, 49K20}
\keywords{Optimal control, K-FORQ/MCH equation, Optimal solution,
Optimality condition.}

\begin{abstract}
In this paper, we investigate the optimal control problem for the k-FORQ/MCH equation with strong viscosity.We prove the existence and uniqueness of this equation under the initial and boundary conditions by Galerkin method. From these results and the Lion's theory we deduce the existence of an optimal solution to the control problem governed by the viscous k-FORQ/MCH equation. Using augmented Lagrangian method, we deduce the first-order necessary optimality condition and two second-order sufficient optimality conditions.

\end{abstract}

\maketitle
\let\oldsection\section
\renewcommand\section{\setcounter{equation}{0}\oldsection}
\renewcommand\thesection{\arabic{section}}
\renewcommand\theequation{\thesection.\arabic{equation}}
\newtheorem{theorem}{\indent Theorem}[section]
\newtheorem{lemma}{\indent Lemma}[section]
\newtheorem{proposition}{\indent Proposition}[section]
\newtheorem{definition}{\indent Definition}[section]
\newtheorem{remark}{\indent Remark}[section]
\newtheorem{corollary}{\indent Corollary}[section]
\newtheorem{claim}{\indent Claim}[section]

\section{Introduction}
This paper considered the following modified Camassa-Holm equation with cubic nonlinearity
\begin{align}
\label{1-1}
&y_t+(u^2-u_x^2)y_x+2u_xy^2+ku_x=0,  y=u-u_{xx}, k=constant.
\end{align}
which was introduced by Fokas \cite{A3}and Olver and Rosenau \cite{PP}, they employed the bi-Hamiltonian splitting method to the modified KdV equation. By using the approximation method , Qiao \cite{Z1} obtained the equation from the two-dimensional Euler equations. We call the k-FORQ/MCH equation for our convenience.

The FORQ/MCH equation is an representative member of the family of peakon equations. The peakon equations attracted much attention among the nonlinear systems and PDEs in recent years. This family contains the Camassa-Holm (CH) equation
\begin{equation*}
y_t+uy_x+2u_xy=0,  y=u-u_{xx}.
\end{equation*}
is a well-known model derived as a shallow water wave equation from the pioneering work of Camassa and Holm \cite{RD}. The equation was originally appeared in Fokas and Fuchssteiner in \cite{AB} and have been paid more attention in \cite{MJ, G, AJ5, AJ7, R2}, etc. The CH equation is the only peakon equation until the Degasperis-Procesi(DP) equation
\begin{equation*}
y_t+uy_x+3u_xy=0,  y=u-u_{xx}.
\end{equation*}
is introduced by the approach of asymptotic integrability \cite{DP, DHH} and possesses several nontrivial properties, such as multi-peakon solutions\cite{L, LS}, shock-type solutions\cite{L2},etc. Following some discoveries of peakon equations, a interesting model with quadratic and cubic nonlinearities is Novikov (N) equation
\begin{equation*}
y_t+u^2y_x+3uu_xy=0,  y=u-u_{xx}.
\end{equation*}
was first proposed by Novikov \cite{N}. The Novikov equation is a new integrable equation. Hone et al. \cite{HW, HLS} considered the  peakon and multi-peakon solutions, the stability and instability studied by Chen et al.\cite{YLZ, CHL, CPD}.

Nowadays, there are many researches on the k-FORQ/MCH equation. When $k=0$, the equation (\ref{1-1}) is called FORQ/MCH equation, Qiao \cite{Z1} disussed the integrability and structure of solutions. The Cauchy problem in Besov spaces and the blow-up phenomena of the solution was studied by Fu et al. \cite{YGYC}. Chen \cite{RYCS} and Liu \cite{YPCS} considered the blow-up phenomena of the solution. Qu et al.\cite{XYC,CXY} investigated the stability of single peakon and peakons. Bies et al. \cite{PPE}investigated the geometry and local analysis of the equation. Himonas et al. \cite{AD} considered the Cauchy problem. Whereas $k\neq 0$, Gui et al. \cite{GYP} studied the existence of peaked traveling-wave solutions, and the formation of singularities of solutions for this equation have also been considered. The k-FORQ/MCH equation was yielded the weak kink solutions \cite{Z2,BZ}, Qiao et al. \cite{Z3,BZ} discovered the peakon-kink interactional solutions which is a new phenomena shows that k-FORQ/MCH equation is not a simplely extension of FORQ/MCH.

On the other hand, there are a lot of research on the control problems of nonlinear PDEs in past decades. Volkwein \cite{SV} employed the augmented Lagrangian-SQP technique to study the optimal control problem of the Burgers equation. The optimal control of the incompressible Navier-Stokes flows in two and three dimensional space studied by Ghattas and Bark \cite{GB}. Sang-Uk Ryu and Atsushi Yagi \cite{RY} solved the optimal control problem governed by the  Keller-Segel equation. The optimal control of the Cahn-Hilliard equation have been considered \cite{MK,MHDW,Zhao}.

Besides, the multi-peakons equations had been paid more attention in academic field, a great deal of mathematical effort in optimal control problem has been devoted to the study of viscous multi-peakon equations, such as viscous Camassa-Holm equation\cite{Tian3}, viscous Degasperis-Procesi equation \cite{Tian2} and viscous Novikov equation \cite{zhou} in recent years. From now on, the optimal control of the FORQ/MCH equations have not been studied yet. The main purpose of this paper is to consider the optimal control problem governed by the viscous k-FORQ/MCH equation
\begin{align}
\label{1-4} y_t-\epsilon y_{xx}+(u^2-u_x^2)y_x+2u_xy^2+ku_x=0,y=u-u_{xx}, k=Constant.
\end{align}
where the parameter $\epsilon>0$ denotes strong viscosity. The key issue for this problem is to deal with the cubic nonliearity $u_xy^2$. We take advantage  of the structure of equation (\ref{1-1}) to overcome this difficulty.

This paper is organized as follows. In the next section, we introduce some functional spaces and the control problem governed by the k-FORQ/MCH equation. In section 3, We also discuss the relation among the norms of weak solution, initial value and control item; In section 4, we consider the optimal control problem of the equation (\ref{1-1}) and prove the existence of optimal solution; In the fifth section , we deduce the surjectivity of the operator $e'(y, \omega)$ which guarantees the first-order necessary optimality condition. In Section 6, We establish two second-order sufficient optimality conditions, which require coercivity of the augmented Lagrangian functional on the whole space or on a suitable subspace.

\section{Preliminaries}\label{sec2}

\subsection{Notations}

In the following, we introduce some notations that will be used
throughout the paper. For fixed $T>0$, $Q=\Omega\times(0,T)$ and $Q_0\subseteq Q$
be an open set with positive measure,
$$V=\left\{y\in H^1(\Omega);
y(x,t)=D y(x,t)=0,~x\in\partial\Omega\right\},$$ and $H=L^2(\Omega),$ let $V'$ and
$H'$ are dual spaces of $V$ and $H$. Then, we obtain
$$
V\hookrightarrow  H=H'\hookrightarrow  V'.
$$
Obviously, each embeddings being dense.

We supply $H$ with the inner product $(\cdot,\cdot)$ and the norm
$\|\cdot\|$, and define a space $W(0,T; V)$ as
$$
W(0,T;V)=\left\{y;y\in L^2(0, T;V),~\frac{dy}{dt}\in
L^2(0,T;V')\right\},
$$
which is a Hilbert space endowed with common inner product, the corresponding norm being
denoted $\|\varphi\|_{W(0, T; V)}$,
$$
\|\varphi\|_{W(0, T; V)}=(\|\varphi\|_{L^2(0, T; V)}+\|\varphi_t\|_{L^2(0, T; V^*)})
$$
For convenience we write $L^2(V), C(H)$ and $W(V)$ in place of  $L^2(0, T; V), C(0, T; H)$ and $W(0, T; V)$.
Since $W(V)$ is continuously embedded into $C(H)$, there exists an embedding constant $c_E>0$ such that
\begin{equation}
\label{2-1}
\|\varphi\|_{C(H)}\leq c_E\|\varphi\|_{W(V)}~~\hbox{for all}~~\varphi\in W(V)
\end{equation}

\subsection{Formation of the optimal control problem}
First of all, we introduce extension operator $B\in \mathcal {L}\left(L^2(Q_0),
L^2(0,T;V')\right)$ which is called the controller is introduced as
\begin{eqnarray}
Bq= \left\{\begin{array}{ll}q,&~ q\in Q_0,
\\0,&~ q\in Q\setminus Q_0.\end{array}\right.
\nonumber
\end{eqnarray}

In this paper, we are concerned with distributed optimal control
problem
\begin{equation}
\label{2-2}
\min
J(y,u)=\frac12\|Gy-z_d\|^2_S+\frac{\delta}{2}\|\omega\|^2_{L^2(Q_0)},
\end{equation}
subject to
\begin{align*}
\label{2-3}
\begin{cases}
y_t-\epsilon y_{xx}+(u^2-u_x^2)y_x+2u_xy^2+ku_x=B\omega,
\\
y(x,t)=D^2 y(x,t)=D^3 y(x,t)=0,~x\in\partial\Omega,
\\
y(x,0)=y_0.
\end{cases}
\end{align*}
where $y=u-u_{xx}$ and $\delta>0$ is fixed, $G\in \mathcal {L}(W(0,T;V), S)$ is an operator, which is
called the observer, $S$ is a real Hilbert space of observations.
The control target is to match the given desired state $z_d$ in $L^2$-sense by adjusting the body force $\omega$ in a control volume
$Q_0\subseteq Q=\Omega\times(0,T)$ in the $L^2$-sense.

Let $X=W(0,T;V)\times L^2(Q_0)$ and $Y=L^2(0,T;V)\times H$. We
define an operator $e=e(e_1,e_2):X\rightarrow Y$, where
$$
e_1=(-\Delta)^{-1}(y_t-\epsilon y_{xx}+(u^2-u_x^2)y_x+2u_xy^2+ku_x-B\omega),
$$
and
$$
e_2=y(x,0)-y_0.
$$
Here $\Delta$ is an operator from $V$ to $V'$. Hence, we write
(\ref{2-2}) in following form
\begin{equation}
\label{2-3}
\min J(y,u)~~\hbox{subject~to}~~e(y,u)=0.
\end{equation}
where $J:X\rightarrow\mathbb{R}$ and $e:X\rightarrow Y
$ are twice continuously Fr$\acute{e}$chet-differentiable and their second Fr$\acute{e}$chet-derivatives are Lipschitz continuous. In the following the Fr$\acute{e}$chet-derivatives with respect to the variable $(y,\omega)$ .

For $c\geq 0$, we introduce the functional $L_c: X\times Y\rightarrow\mathbb{R}$ associated with the control problem:
\begin{equation*}
L_c(y, \omega, \lambda, \mu)=J(y,\omega)+(e_1(y, \omega), \lambda)_{L^2(V)}+(e_2(y, \omega), \mu)_H+\frac{c}2\|e(y, \omega)\|_Y^2
\end{equation*}
where $J(y,\omega)$ is above chosen cost function and the Hilbert space $X\times Y$ is endowed with the Hilbert space product topology.

\section{Existence and uniqueness of weak solution}\label{sec2}

In this section, we prove the existence and uniqueness of weak
solution for the following equation
\begin{align}
\label{3-1} y_t-\epsilon y_{xx}+(u^2-u_x^2)y_x+2u_xy^2+ku_x=B\omega,y=u-u_{xx},~(x,t)\in Q,
\end{align}
with the boundary conditions
\begin{align}
\label{3-2} y(x,t)=D y(x,t)=D^2y(x,t)=0,~x\in\partial\Omega,
\end{align}
and the initial value condition
\begin{align}
\label{3-3} y(x,0)=y_0(x),~~x\in \Omega,
\end{align}
where  $B\omega\in L^2(0,T;H)$ and a control
$\omega\in L^2(Q_0)$.

We shall give the definition of weak solution in the space
$W(0,T;V)$ for the problem (\ref{3-1})-(\ref{3-3}).

\begin{definition}
\label{def3.1} For all $\eta\in V,~t\in(0,T)$, a function $y(x,t)\in
W(0,T;V)$ is called a weak solution to problem
(\ref{3-1})-(\ref{3-3}), if 
\begin{align*}
\frac{d}{dt}(y,\eta)_{V',V}+\epsilon( u, \eta)_V+((u^2-u_x^2)y_x+2u_xy^2+ku_x,\eta)_{V',V}=(B\omega,\eta)_{V',V}.
\end{align*}
\end{definition}

Now, we give Theorem \ref{thm3.1}, which ensures the existence of a
uniqueness weak solution to problem (\ref{3-1})-(\ref{3-3}).

\begin{theorem}
\label{thm3.1} Suppose $y_0\in H$,$B\omega\in L^2(0,T;V')$ and $\|y_0\|^2+C(\varepsilon)T\|B\omega\|^2<(e^{2C(\varepsilon)T}-1)^{-\frac12}$,  then
the problem (\ref{3-1})-(\ref{3-3}) admits a unique weak solution
$y(x,t)\in W(0,T;V)$. Moreover, there exists a constant $c>0$, which depends on $T$ and $\epsilon$ , such that
$$
\|y\|_{W(V)}\leq C(\exp\|y_0(x)\|^2_H+\|\omega\|^2_{L^2(Q_0)}+1)
$$
\end{theorem}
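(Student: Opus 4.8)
The plan is to establish Theorem \ref{thm3.1} via the Faedo--Galerkin method, following the classical three-step scheme: construct finite-dimensional approximate solutions, derive uniform a priori estimates, and pass to the limit; uniqueness is then handled separately by an energy argument on the difference of two solutions. First I would choose a Galerkin basis $\{w_j\}_{j\geq1}$ of $V$ (e.g. eigenfunctions of the relevant elliptic operator associated with the inner product on $V$, which are orthonormal in $H$ and orthogonal in $V$), and seek $y_m(t)=\sum_{j=1}^m g_{jm}(t)w_j$ solving the projected ODE system obtained by testing \eqref{3-1} against each $w_j$. Note that $y=u-u_{xx}$ must be inverted: since $u$ is recovered from $y$ by the elliptic operator $(1-\partial_{xx})^{-1}$ with the stated homogeneous boundary conditions, I would treat $u=u[y]$ as a smoothing linear operator of $y$, so that $u$, $u_x$ are controlled in higher norms by $\|y\|$ and the map $y\mapsto(u^2-u_x^2)y_x+2u_xy^2+ku_x$ is locally Lipschitz on finite-dimensional spaces; Peano/Pic--Lindelöf then gives local-in-time existence of $y_m$.

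The heart of the proof is the a priori estimate. Testing the Galerkin equation with $y_m$ (i.e. taking $\eta=y_m$ in the weak formulation, using $((u^2-u_x^2)y_x,y)$-type manipulations) I would exploit the structural identity of the equation \eqref{1-1}: integrating by parts, $\int (u^2-u_x^2)y_x\,y\,dx = -\frac12\int (u^2-u_x^2)_x\,y^2\,dx = -\int (uu_x-u_xu_{xx})y^2\,dx = -\int u_x(u-u_{xx})y^2\,dx = -\int u_x y^3\,dx$, which exactly cancels (up to the factor $2$) the contribution of the $2u_xy^2$ term against $y$ — this is the cancellation the introduction alludes to for handling the cubic nonlinearity. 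The viscosity term contributes $\epsilon\|u_m\|_V^2$ (using $(-\epsilon y_{xx},y)=\epsilon(\ldots)$ and $y=u-u_{xx}$ to produce a coercive $H^2$-type term in $u$), and the remaining terms $ku_x$ and $B\omega$ are bounded by Cauchy--Schwarz and Young's inequality in terms of $\|y_m\|^2$ and $\|B\omega\|_{V'}^2$. The smallness hypothesis $\|y_0\|^2+C(\epsilon)T\|B\omega\|^2<(e^{2C(\epsilon)T}-1)^{-1/2}$ is what I would use to close a nonlinear Gronwall inequality of the form $\frac{d}{dt}\|y_m\|^2 + \epsilon c\|u_m\|_V^2 \leq C(\epsilon)(\|y_m\|^2 + \|y_m\|^4) + C\|B\omega\|^2$: the smallness guarantees $\|y_m(t)\|^2$ stays below the threshold where the quartic term would cause blow-up, yielding a uniform bound on $\|y_m\|_{L^\infty(H)\cap L^2(V)}$, and hence the final estimate $\|y\|_{W(V)}\leq C(\exp\|y_0\|_H^2+\|\omega\|_{L^2(Q_0)}^2+1)$ after also bounding $\partial_t y_m$ in $L^2(V')$ directly from the equation.

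With uniform bounds in hand, I would extract a weakly(-$*$) convergent subsequence in $L^2(V)$ with $\partial_t y_m \rightharpoonup \partial_t y$ in $L^2(V')$, use the Aubin--Lions--Simon lemma to get strong convergence in $L^2(H)$ (and hence, via the smoothing $y\mapsto u$, strong convergence of $u_m, \partial_x u_m$ in suitable norms), and pass to the limit in each term of the weak formulation — the nonlinear terms converge because of the compactness and the continuity of the Nemytskii/bilinear maps on the spaces where convergence holds. The initial condition $y(0)=y_0$ is recovered from $y\in W(V)\hookrightarrow C(H)$ by the standard argument. For uniqueness, given two weak solutions $y_1,y_2$ with the same data, I would set $w=y_1-y_2$, $v=u_1-u_2$, test the difference equation with $w$, again using the structural cancellation and controlling the differences of the nonlinear terms by $\|w\|\,\|w\|_V$ and products involving the (already bounded) norms of $y_1,y_2,u_1,u_2$; absorbing the $\|w\|_V$ factors into the viscosity term and applying Gronwall gives $w\equiv0$. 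The main obstacle I anticipate is making the cubic-term cancellation fully rigorous at the Galerkin level — the identity $\int(u^2-u_x^2)y_x y\,dx=-\int u_xy^3\,dx$ requires integrating by parts with the boundary conditions \eqref{3-2}, and one must be careful that $y_m$ (a finite combination of basis functions) satisfies enough of these boundary conditions for the boundary terms to vanish, and that the resulting $\int u_x y_m^3$ term can be estimated (via Sobolev embedding $H^1(\Omega)\hookrightarrow L^\infty$, since $\Omega\subset\mathbb{R}$) to feed the nonlinear Gronwall argument rather than derailing it.
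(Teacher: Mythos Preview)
Your overall scheme---Galerkin approximation, a priori estimates, Aubin--Lions compactness, uniqueness by Gronwall on the difference---is exactly the paper's. The handling of the cubic nonlinearity, however, is organized differently. The paper proceeds in \emph{two} energy steps: it first tests the equation with $u_n$ (not $y_n$) to obtain the preliminary bound $\|u_n\|_V\leq C$, hence $\|u_n\|_{L^\infty}\leq C$; only then does it test with $y_n$, and it does \emph{not} invoke the structural identity you propose. Instead it splits $\int(u_n^2-u_{n,x}^2)y_{n,x}y_n\,dx$ into $\int u_n^2 y_n y_{n,x}\,dx$ (controlled via $\|u_n\|_\infty\leq C$ from Step~1) and, after one integration by parts, $\int u_{n,x}u_{n,xx}y_n^2\,dx$ (controlled by $\|u_{n,x}\|_\infty\|u_{n,xx}\|_\infty\|y_n\|_H^2\leq C\|y_n\|_H^3\|y_n\|_V$). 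This yields
\[
\frac{d}{dt}\|y_n\|_H^2+\epsilon\|y_n\|_V^2\leq C(\epsilon)\bigl(\|y_n\|_H^2+\|y_n\|_H^6+\|B\omega\|_{V'}^2\bigr),
\]
with a \emph{sextic} power; the smallness hypothesis $\|y_0\|^2+C(\epsilon)T\|B\omega\|^2<(e^{2C(\epsilon)T}-1)^{-1/2}$ is tailored precisely to this, since the exponent $-1/2$ arises from the Bernoulli ODE $w'\leq Cw+Cw^3$ for $w=\|y\|^2$.

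Your route via the identity $\int(u^2-u_x^2)y_x\,y\,dx=-\int u_x y^3\,dx$ is valid and arguably more transparent---it removes the need for the paper's preliminary $u$-test. But note that the cancellation with $2\int u_x y^3\,dx$ is only partial: you are left with $+\int u_x y^3\,dx$, and estimating this by $\|u_x\|_\infty\leq C\|y\|_H$ together with $\|y\|_{L^3}^3\leq C\|y\|_H^2\|y\|_V$ gives $C\|y\|_H^3\|y\|_V\leq C(\epsilon)\|y\|_H^6+\tfrac{\epsilon}{4}\|y\|_V^2$, i.e.\ again the sextic term, not the quartic $\|y_m\|^4$ you wrote. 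A quartic bound would require knowing $\|u_x\|_\infty\leq C$ independently of $\|y\|$, which is exactly the paper's Step~1 that you omitted---and then the smallness condition would take a different form. So either keep your cancellation route and correct the power to $\|y\|_H^6$ (which matches the stated hypothesis), or insert the preliminary test with $u$ as the paper does; as written, the claimed inequality shape does not close against the theorem's assumption.
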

\begin{proof} Galerkin method is applied to the proof.

Denote $\mathbb{A}=-\Delta$ as a differential operator, let
$\{\psi_i\}_{i=1}^{\infty}$ denote the eigenfunctions of the
operator $\mathbb{A}=-\Delta$. For $n\in N$, define the
discrete ansatz space by
$$
V_n=\hbox{span}\{\psi_1,\psi_2,\cdots,\psi_n\}\subset V.
$$
Let $y_n(t)=y_n(x,t)=\displaystyle\sum_{i=1}^{n}y^n_i(t)\psi_i(x)$ require
$y_n(0,\cdot)\rightarrow y_0$ in $H$ holds true.

By analyzing the limiting behavior of sequences of smooth function
$\{y_n\}$, we can prove the existence of a weak solution to the
problem (\ref{3-1})-(\ref{3-3}).

Performing the Galerkin procedure for Eq.(\ref{3-1}), we obtain 
\begin{align}
 y_{n,t}-\epsilon y_{n,xx}+(u_n^2-u_{n,x}^2)y_{n,x}+2u_{n,x}y_{n}^2+ku_{n,x}=B\omega,~~(x,t)\in Q,
\label{3-4}
\end{align}
with the boundary conditions
\begin{align}
\label{3-5}y_n(x,t)=D y_n(x,t)=D^2 y_n(x,t)=0,~x\in\partial\Omega,
\end{align}
and the initial value condition
\begin{align}
\label{3-6} y_n(x,0)=y_{n,0}(x),~~x\in \Omega.
\end{align}

Obviously, Eq.(\ref{3-4}) is an ordinary differential equation and
according to ODE theory, there exists a unique solution to the
problem (\ref{3-4})-(\ref{3-6}) in the interval $[0,t_n)$. What we
should do is to show that the solution is uniformly bounded when
$t_n\rightarrow T$. We need also to show that times $t_n$
are not decaying to $0$ as $ n\rightarrow\infty$.

Then, we shall prove the existence of solution in the following
steps.

Step 1. Multiplying Eq.(\ref{3-4}) by $u_n$, integrating with
respect to $x$ on $\Omega$, then we obtain
\begin{align*}
&\frac12\frac d{dt}\int_\Omega (u_n^2+u_{n,x}^2)dx+\epsilon\int_\Omega (u_{n,x}^2+u_{n,xx}^2)dx
=(B\omega,u_n)_{V',V}
\end{align*}
By H\"{o}lder inequality and Young's inequality, we obtain
\begin{align*}
(B\omega,u_n)_{V',V}\leq\frac1{4\epsilon}\|B\omega\|_{V'}^2+\epsilon \|u_n\|_V^2.
\end{align*}
Then, we have
\begin{align*}
&\frac12\frac d{dt}\int_\Omega (u_n^2+u_{n,x}^2)dx+\epsilon\int_\Omega u_{n,xx}^2dx\leq\frac1{4\epsilon}\|B\omega\|_{V'}^2.
\nonumber
\end{align*}
Hence,  we have
\begin{align*}
\frac{d}{dt}\int_\Omega (u_n^2+u_{n,x}^2)dx+\epsilon\int_\Omega u_{n,xx}^2dx\leq C\|B\omega\|_{V'}^2.
\end{align*}
Since $B\omega\in L^2(0,T;V')$ is the control item, we obtain
\begin{align}
\label{3-7}
\int_\Omega (u_n^2+u_{n,x}^2)dx=\|u\|_V^2 \leq C,~\forall t\in[0,T],
\end{align}
and
\allowdisplaybreaks
\begin{align}
&\sup_{0<t<T}\int_\Omega u_{n,xx}^2dx\leq C,
\label{3-8}
\end{align}
where $C$ depends only on $u_n(0)+u_{n,x}(0)$.

On the other hand,  we have
$$
\|u\|_ \infty\leq \|u\|_V \leq C,\|u\|_H\leq C.
$$

Step 2.Multiplying Eq.(\ref{3-4}) by $y_n$, integrating with
respect to $x$ on $\Omega$, then we obtain
\begin{align}
&\frac12\frac d{dt}\int_\Omega y_n^2dx+\epsilon\int_\Omega y_{n,x}^2dx
\nonumber
\\
&=\int_\Omega (u^2_n-u^2_{n,x})y_ny_{n,x}dx+ku_{n,x}y_n+(B\omega,y_n)_{V',V}
\nonumber
\\
&=\int_\Omega u^2_ny_ny_{n,x}dx-\int_\Omega u^2_{n,x}y_ny_{n,x}dx+\int_\Omega ku_{n,x}y_ndx+(B\omega,y_n)_{V',V}
\nonumber
\\
&=\int_\Omega u^2_ny_ny_{n,x}dx+\int_\Omega u_{n,x}u_{n,xx}y^2_ndx+(B\omega,y_n)_{V',V}
\label{3-9}
\end{align}

Using H\"{o}lder's inequality and Young's inequality for products, we obtain
\begin{align}
\int_\Omega u^2_ny_ny_{n,x}&\leq\|u\|^2_\infty\int_\Omega y_ny_{n,x}dx
\nonumber
\\
&\leq C\int_\Omega y_ny_{n,x}dx
\nonumber
\\
&\leq  \frac3{2\epsilon}\int_\Omega y_n^2dx+\frac\epsilon6\int_\Omega y_{n,x}^2dx
\label{3-10}
\end{align}

Note that $y=u-u_{xx}$, then $u=(1-\partial_x)^{-1}y=p*y$ where $p(x)=\frac12e^{-|x|}$,$u_{xx}=u-y$ and $\|p\|_{L^1}=\|\partial_x p\|_{L^1}=1$ . By the Young's inequality, we have
\begin{align}
\|u\|_{L^\infty},\quad\|u_x\|_{L^\infty}\leq C\|y\|_{L^\infty} \qquad
\hbox{and}\qquad
\|u\|_{L^\infty}, \quad\|u_x\|_{L^\infty}\leq C\|y\|_H.
\label{3-16}
\end{align}

Using the Sobolev embedding theorem, we obtain
\begin{align}
\int_\Omega u_{n,x}u_{n,xx}y^2_ndx&\leq \|u_{n,x}\|_\infty \|u_{n,xx}\|_\infty\int_\Omega y^2_ndx
\nonumber
\\
&\leq C\|y_n\|_H \|y_n\|_V \int_\Omega y^2_ndx
\nonumber
\\
&\leq C\|y_n\|^3_H \|y_n\|_V
\nonumber
\\
&\leq C(\epsilon)\|y_n\|^6_H +\frac\epsilon6 \|y_n\|^2_V
\label{3-11}
\end{align}

By H\"{o}lder inequality and Young's inequality, we obtain
\begin{align}
(B\omega,u_n)_{V',V}\leq C(\epsilon)\|B\omega\|_{V'}^2+\frac\epsilon6 \|u_n\|_V^2.
\label{3-13}
\end{align}

Combining (\ref{3-9})-(\ref{3-13}), we see that
\begin{align}
& \frac d{dt}\|y_n\|^2_H+\epsilon\|y_n\|^2_V
\nonumber
\\
\leq& C(\epsilon)(\|B\omega\|_{V'}^2+\|y_n\|^6_H+\|y_n\|^2_H)
\end{align}

By the Gronwall's inequality, we derive that
\begin{align}
\|y_n\|^2_H\leq \dfrac{e^{C(\epsilon)t}A}{\sqrt{(1-e^{2C(\epsilon)t})A+1}}\leq C(T)
\label{3-17}
\end{align}

A simple calculation reveals that
\begin{align}
\int_\Omega y^2_ndx=&\int_\Omega (u_n-u_{n,xx})^2dx
\nonumber
\\
=&\int_\Omega (u^2_n-2u_nu_{n,xx}+u_{n,xx}^2)dx
\nonumber
\\
=&\int_\Omega u^2_ndx+2\int_\Omega u^2_{n,x}dx+\int_\Omega u_{n,xx}^2dx
\label{3-14}
\end{align}

Integrating (\ref{3-14}) with respect to t on $[0, T]$, we derive that
\begin{align}
&\int_0^T\|y_n\|_H^2dt
\nonumber
\\
=&\int_0^T\|u_n\|_H^2dt+2\int_0^T\|u_n\|_V^2dt+\int_0^T\int_\Omega u_{n,xx}^2dxdt
\nonumber
\\
\leq& C
\label{3-15}
\end{align}
Meanwhile, we derive from (\ref{3-14}) that
\begin{align}
\int_\Omega u_{n,xx}^2dx\leq C
\label{3-18}
\end{align}
and from (\ref{3-17}) and (\ref{3-15}) that
\begin{align}
\|y_n\|_{L^2(V)}&=\int_{0}^{T}\|y_n\|^2_Vdt
\nonumber
\\
&\leq C\|y_{n,0}(x)\|^2_H+\int_0^T\|u_n\|_H^2dt+C(\epsilon)\int_0^T\|y_n\|^6_Hdt\leq C
\label{3-19}
\end{align}

By H\"{o}lder's inequality, Poincar\'{e}'s inequality and the Sobolev embedding theorem, we deduce from the first equation in (\ref{3-4}) that
\begin{align}
&\|y_{n,t}\|_{L^2(V')}
\nonumber
\\
=&\underset{\|\varphi\|_{L^2(V)}=1}{\sup}\bigg|\int_0^T\int_\Omega y_{n,x}\varphi_x+\int_0^T\int_\Omega (u^2_n-u^2_{n,x})y_n\varphi_xdxdt
\nonumber
\\
&\qquad+k\int_0^T\int_\Omega u_{n,x}\varphi dxdt+\int_0^T(B\omega,\varphi_x)_{V',V}dt\bigg|
\nonumber
\\
=&\underset{\|\varphi\|_{L^2(V)}=1}{\sup}\bigg|\int_0^T\int_\Omega y_{n,x}\varphi_x+\int_0^T\int_\Omega u^2_ny_n\varphi_xdxdt
+\int_0^T\int_\Omega 2u_{n,x}u_{n,xx}y_n\varphi dxdt
\nonumber
\\
&\qquad+k\int_0^T\int_\Omega u_{n,x}\varphi dxdt+\int_0^T\int_\Omega u_{n,x}^2 y_{n,x}\varphi dxdt+\int_0^T(B\omega,\varphi)_{V',V}dt\bigg|
\nonumber
\\
\leq&\underset{\|\varphi\|_{L^2(V)}=1}{\sup}\bigg(\int_0^T\| y_{n,x}\|_H\|\varphi_x\|_Hdt
+\int_0^T\| u_n\|^2_\infty\| y_n\|_H\|\varphi_x\|_Hdt
\nonumber
\\
&\qquad\qquad+\int_0^T\|u_{n,x}\|_\infty \|\varphi\|_\infty\|u_{n,xx}\|_H\|y_n\|_Hdt+\int_0^T\|u_{n,x}\|_\infty \|\varphi\|_Hdt
\nonumber
\\
&\qquad\qquad+\int_0^T\|u_{n,x}\|^2_\infty \|y_{n,x}\|_H\|\varphi\|_Hdt+\int_0^T\|B\omega\|_{V'}\|\varphi\|_Vdt\bigg)
\nonumber
\\
\leq&C \|y_n\|_{L^2(V)}+\|B\omega\|_{L^2(V')}\leq C.
\label{3-20}
\end{align}
where we have used $\|\varphi\|_\infty\leq\|\varphi\|_V,\|u_{n,x}\|_\infty\leq\|u_{n,xx}\|_H\leq C.$

Thus, we have:

(1) For every $t\in [0,T]$, the sequence $\{y_n\}_{n\in N}$ is
bounded in $L^2(0,T;V)$, which is independent of the dimension of
ansatz space $n$.

(2) For every $t\in[0,T]$, the sequence $\{y_{n,t}\}_{n\in N}$ is
bounded in $L^2(0,T;V')$, which is independent of the dimension of
ansatz space $n$.

Based on the above discussion, we obtain $\{y_{n, t}\}_{n\in\mathbb{N}}\subset W(0,T;V)$ and  $W(0,T;V)$ is compactly embedded into
$C(0,T;H)$ which denotes the space of continuous functions. We
conclude convergence of a subsequences, again denoted by $\{y_n\}$
weak into $W(0,T;V)$, weak-star in $L^{\infty}(0,T;H)$ and strong
in $L^2(0,T;H)$ to functions $y(x,t) \in W(0,T;V)$.  Since the
proof of uniqueness is easy, we omit it.

We also deduce from (\ref{3-14}) and (\ref{3-20})   that
\begin{align*}
\|y_t\|_{W(V)}&=\|y\|_{L^2(V)}+\|y_t\|_{L^2(V')}
\nonumber
\\
&\leq C\|y_0(x)\|^2_H+C+C \|y\|^2_{L^2(V)}+\|B\omega\|_{L^2(V')}
\nonumber
\\
&\leq C(\|y_0(x)\|^2_H+\|\omega\|^2_{L^2(Q_0)}+1)
\end{align*}

Then, Theorem \ref{thm3.1} is proved.
 \end{proof}

\section{Optimal control problem}\label{sec4}

In this section, we consider the optimal control
problem associated with the equation (\ref{3-1}) and prove
 the existence of optimal solution basing on Lions' theory (see \cite{Lions}).

In the following, we suppose $L^2(Q_0)$ is a Hilbert space of
control variables, we also suppose $B\in \mathcal {L}(L^2(Q_0),
L^2(0,T;H))$ is the controller and a control $u\in L^2(Q_0)$,
consider the following control system
\begin{align}
\begin{cases}
\label{4-1} y_t-\epsilon y_{xx}+(u^2-u_x^2)y_x+2u_xy^2+ku_x=B\omega,
\\
y(x,t)=D^2 y(x,t)=D^3 y(x,t)=0,~x\in\partial\Omega,
\\
y(x,0)=y_0.
\end{cases}
\end{align}
Here in (\ref{4-1}), it is assume that $y_0\in H^1(\Omega)$ with
$y_0=0, x\in\partial\Omega$. By virtue of Theorem \ref{thm3.1}, we can define
the solution map $u\rightarrow y(u)$ of $L^2(Q_0)$ into $W(0,T;V)$.
The solution $y(u)$ is called the state of the control system
(\ref{4-1}). The observation of the state is assumed to be given by
$Gy$. Here $G\in \mathcal {L}(W(0,T;V), S)$ is an operator, which is
called the observer, $S$ is a real Hilbert space of observations.
The cost function associated with the control system (\ref{4-1}) is
given by 
\begin{eqnarray}
\label{4-2}
J(y,u)=\frac12\|Gy-z_d\|_S^2+\frac{\delta}2\|u\|^2_{L^2(Q_0)},
\end{eqnarray}
where $z_d\in S$ is a desired state and $\delta>0$ is fixed.

An optimal control problem about the k-FORQ/MCH equation  is
\begin{equation}
\min J(y,u),
\label{4-3}
\end{equation}
where $(y,u)$ satisfies (\ref{4-1}).

Now, we give Theorem \ref{thm4.1} on the existence of an optimal
solution.

\begin{theorem}
\label{thm4.1} There exists an
optimal control solution $(y^{*},\omega^*)$ to the problem (\ref{4-1}).
\end{theorem}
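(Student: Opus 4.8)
The plan is to use the direct method of the calculus of variations (Lions' theory), exploiting the a priori estimate from Theorem \ref{thm3.1}. First I would observe that the cost functional $J$ is bounded below by $0$, so the infimum
$$
\mathfrak{m}=\inf\{J(y,\omega):(y,\omega)\in W(0,T;V)\times L^2(Q_0),\ (y,\omega)\text{ satisfies }(\ref{4-1})\}
$$
is a finite nonnegative number. Pick a minimizing sequence $(y_n,\omega_n)$ with $y_n=y(\omega_n)$ and $J(y_n,\omega_n)\to\mathfrak{m}$. From the $\delta/2\,\|\omega_n\|^2_{L^2(Q_0)}$ term in $J$ we immediately get that $\{\omega_n\}$ is bounded in $L^2(Q_0)$, hence (after passing to a subsequence) $\omega_n\rightharpoonup\omega^*$ weakly in $L^2(Q_0)$. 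Since $B\in\mathcal{L}(L^2(Q_0),L^2(0,T;V'))$ is linear and continuous, $B\omega_n\rightharpoonup B\omega^*$ weakly in $L^2(0,T;V')$, and in particular $\|B\omega_n\|_{L^2(V')}$ stays bounded.

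Next I would feed this bound into the estimate of Theorem \ref{thm3.1}: since $\|y_0\|_H$ is fixed and $\|\omega_n\|_{L^2(Q_0)}$ is bounded, the bound $\|y_n\|_{W(V)}\le C(\exp\|y_0\|_H^2+\|\omega_n\|^2_{L^2(Q_0)}+1)$ shows $\{y_n\}$ is bounded in $W(0,T;V)$. Therefore, along a further subsequence, $y_n\rightharpoonup y^*$ weakly in $W(0,T;V)$ (hence $y_{n,t}\rightharpoonup y^*_t$ weakly in $L^2(0,T;V')$ and $y_n\rightharpoonup y^*$ weakly in $L^2(0,T;V)$), and by the Aubin--Lions compact embedding $W(0,T;V)\hookrightarrow\hookrightarrow C(0,T;H)\cap L^2(0,T;H)$ we also have $y_n\to y^*$ strongly in $L^2(0,T;H)$ and in $C(0,T;H)$. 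The strong $C(0,T;H)$ convergence lets us pass to the limit in the initial condition $y_n(x,0)=y_0$, giving $y^*(x,0)=y_0$, i.e. $e_2(y^*,\omega^*)=0$.

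The crux is showing $(y^*,\omega^*)$ satisfies the PDE in the weak sense, i.e. passing to the limit in the nonlinear terms $(u_n^2-u_{n,x}^2)y_{n,x}$ and $2u_{n,x}y_n^2$ of the weak formulation in Definition \ref{def3.1}. Here I would use that the map $y\mapsto u=(1-\partial_x)^{-1}y=p*y$ is a smoothing isomorphism, so strong convergence $y_n\to y^*$ in $L^2(0,T;H)$ yields strong convergence $u_n\to u^*$, $u_{n,x}\to u^*_x$, and $u_{n,xx}\to u^*_{xx}$ in $L^2(0,T;H)$, together with the uniform bounds $\|u_n\|_{L^\infty},\|u_{n,x}\|_{L^\infty}\le C$ from (\ref{3-16}) and (\ref{3-8}); combining strong convergence of the low-order factors with weak convergence $y_{n,x}\rightharpoonup y^*_x$ in $L^2(0,T;H)$, each product converges in the distributional sense against a fixed test function $\eta\in V$ (or $\varphi\in L^2(0,T;V)$) — the coefficient $u_n^2-u_{n,x}^2$ converges strongly in $L^2(Q)$ while being uniformly bounded, so the product with the weakly convergent $y_{n,x}$ passes to the limit, and similarly $u_{n,x}y_n^2$ passes since $y_n\to y^*$ strongly in $L^4(Q)$ (interpolation between $L^\infty(0,T;H)$-type bounds via \eqref{3-17} and $L^2(0,T;V)$, using $H^1\hookrightarrow L^\infty$ in one dimension) and $u_{n,x}\to u^*_x$ strongly. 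This shows $e_1(y^*,\omega^*)=0$, so $(y^*,\omega^*)$ is admissible. Finally, weak lower semicontinuity of the convex continuous map $(y,\omega)\mapsto\frac12\|Gy-z_d\|_S^2+\frac\delta2\|\omega\|^2_{L^2(Q_0)}$ (using that $G\in\mathcal{L}(W(0,T;V),S)$ and $y_n\rightharpoonup y^*$ in $W(0,T;V)$ gives $Gy_n\rightharpoonup Gy^*$ in $S$, and norms are weakly l.s.c.) yields $J(y^*,\omega^*)\le\liminf J(y_n,\omega_n)=\mathfrak{m}$, forcing $J(y^*,\omega^*)=\mathfrak{m}$. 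Hence $(y^*,\omega^*)$ is an optimal solution. The main obstacle is the limit passage in the cubic term $2u_{n,x}y_n^2$; everything there hinges on upgrading the weak $W(0,T;V)$ convergence of $y_n$ to strong convergence in a space (like $L^4(Q)$ or $L^2(0,T;H)$ paired with the smoothing of $u_n$) strong enough to handle the product, which the Aubin--Lions lemma together with the one-dimensional Sobolev embedding provides.
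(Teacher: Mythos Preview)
Your outline is correct and follows the same global strategy as the paper: direct method, coercivity of $J$ via the $\frac{\delta}{2}\|\omega\|^2$ term and Theorem~\ref{thm3.1}, extraction of weakly convergent subsequences, Aubin--Lions compactness, passage to the limit in the weak formulation, and weak lower semicontinuity of $J$.

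The one genuine difference is how the nonlinear terms are handled. The paper observes the divergence identity
\[
(u^{2}-u_{x}^{2})y_{x}+2u_{x}y^{2}=\partial_{x}\bigl((u^{2}-u_{x}^{2})y\bigr),
\]
integrates by parts against $\psi\in L^{2}(0,T;V)$, and is left with the single expression $(u^{2}-u_{x}^{2})y\,\psi_{x}$, in which only $y$ (not $y_{x}$) appears; strong convergence $y^{n}\to y^{*}$ in $L^{2}(0,T;L^{\infty})$ then disposes of all four pieces $I$--$IV$ at once. You instead keep the two terms separate, pairing weak convergence of $y_{n,x}$ with strong convergence of the coefficient for $(u_{n}^{2}-u_{n,x}^{2})y_{n,x}$, and using $L^{4}(Q)$ strong convergence of $y_{n}$ for $2u_{n,x}y_{n}^{2}$. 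Both routes are valid; the paper's is shorter and exploits the specific algebraic structure of the FORQ/MCH nonlinearity (this is the ``structure of equation~(\ref{1-1})'' alluded to in the introduction), while yours is more generic and would transfer to equations lacking such a conservation-law form. One small imprecision: saying the coefficient ``converges strongly in $L^{2}(Q)$ while being uniformly bounded'' is not quite enough by itself to pass the product against an $L^{2}$ test function; what actually makes your argument go through is that $u_{n},u_{n,x}\to u^{*},u_{x}^{*}$ in $C(0,T;L^{\infty})$ (via $y_{n}\to y^{*}$ in $C(0,T;H)$ and the smoothing $p\ast\cdot$), combined with the fact that the test function lies in $L^{2}(0,T;V)\subset L^{2}(0,T;L^{\infty})$.
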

\begin{proof} Suppose $(y,\omega)$ satisfy the equation
$e(y,\omega)=0$. In view of (\ref{4-2}),  we deduce that
$$
J(y,\omega)\geq\frac{\delta}2\|\omega\|^2_{L^2(Q_0)}.
$$
By Theorem \ref{thm3.1}, we obtain
$$
\|y\|_{W(0,T;V)}\rightarrow\infty~~\hbox{yields}~~\|\omega\|_{L^2(Q_0)}\rightarrow\infty.
$$
Therefore, 
\begin{equation}
\label{4-5}
J(y,\omega)\rightarrow\infty,~~\hbox{when}~~\|(y,\omega)\|_X\rightarrow\infty.
\end{equation}

As the norm is weakly lower semi-continuous, we achieve that $J$ is
weakly lower semi-continuous. Since for all $(y,\omega)\in X$,
$J(y,\omega)\geq 0$,
there exists $\lambda\geq 0$
defined by
$$
\lambda=\inf\{J(y,\omega)|(y,\omega)\in X,~e(y,\omega)=0\},
$$
which means the existence of a minimizing sequence
$\{(y^n,\omega^n)\}_{n\in N}$ in $X$ such that
$$
\lambda=\lim_{n\rightarrow\infty}J(y^n,\omega^n)~~\hbox{and}~~
e(y^n,\omega^n)=0,\quad\forall n\in N.
$$
From (\ref{4-5}), there exists an element $(y^{*},\omega^{*})\in X$ such
that when $n\rightarrow\infty$, 
\begin{align}
\label{4-6}
&y^n\rightarrow y^{*},~\hbox{weakly},~~y\in W(0,T;V),
\\
\label{4-7}
&\omega^n\rightarrow \omega^{*},~\hbox{weakly},~~\omega\in L^2(Q_0).~~~~~
\end{align}
Then, using (\ref{4-6}), we get
\begin{equation}
\label{4-8}
\lim_{n\rightarrow\infty}\int_0^T(y^n_t(x,t)-y^{*}_t,\psi(t))_{V',V}dt=0,\quad\forall\psi\in
L^2(0,T;V).
\end{equation}
Since $W(0, T; V)$ is compactly embedded into $L^2(0, T;
L^{\infty})$, we have $y^n\rightarrow y^*$ strongly in $L^2(0, T;  L^{\infty})$, we can infer that $\|y^n\|_{L^2(0, T; L^\infty)}$ is bounded . As the $y^n\rightarrow y^*$ weakly in $W(0, T; V)$,
$\|y^n\|_{W(0, T; V)}$ is bounded. From embedding theorem, we deduce that $\|y^n\|_{L^2(0, T; L^\infty)}$ is also bounded. As $W(0, T; V)$ is compactly embedded into $C(0, T; H)$ , we then derive $\|y^n\|_{C(0, T; H)}$ is bounded.

On the other hand, we have the following estimates
\begin{align*}
&\|u^n\|_{L^\infty}\leq C\|y^n\|_H,\qquad \|u_x^n\|_{L^\infty}\leq C\|y^n\|_H,
\\
&\|u^*\|_{L^\infty}\leq C\|y^*\|_H,\qquad \|u_x^*\|_{L^\infty}\leq C\|y^*\|_H,
\\
&\|u^n-u^*\|_{L^\infty}\leq\|y^n-y^*\|_{L^\infty},\qquad \|u_x^n-u_x^*\|_{L^\infty}\leq C\|y^n-y^*\|_{L^\infty},
\\
&\|u^n\|_H\leq \|y^n\|_H,\qquad \|u_x^n\|_H\leq C\|y^n\|_H,
\\
&\|u^*\|_H\leq \|y^*\|_H,\qquad \|u_x^*\|_H\leq C\|y^*\|_H.
\end{align*}

Using (\ref{4-7}) again, we derive that 
\begin{eqnarray}
\left|\int_0^T\!\!\!\int_\Omega(B\omega-B\omega^{*})\psi dxdt\right|\rightarrow
0,\quad n\rightarrow\infty,~~\forall \psi\in L^2(0,T;H).\nonumber
\end{eqnarray}

Notice that
\allowdisplaybreaks
\begin{align*}
&\bigg|\int_0^T\int_\Omega\bigg(\big(((u^n)^2-(u^n_x)^2)y^n\big)_x-\big(((u^*)^2-(u^*_x)^2)y^*\big)_x\bigg)\psi \,dx\,dt\bigg|
\\
=&\bigg|\int_0^T\int_\Omega\bigg(\bigg((u^n)^2-(u^n_x)^2\bigg)y^n-\bigg((u^*)^2-(u^*_x)^2\bigg)y^*\bigg)\psi_x \,dx\,dt\bigg|
\\
\leq&\bigg|\int_0^T\int_\Omega\bigg((u^n)^2-(u^*)^2 y^n\bigg)\psi_x \,dx\,dt\bigg|
    +\bigg|\int_0^T\int_\Omega(u^*)^2(y^n-y^*)\psi_x \,dx\,dt\bigg|
\\
&\quad  +\bigg|\int_0^T\int_\Omega\bigg((u_x^n)^2-(u_x^*)^2 y^n\bigg)\psi_x \,dx\,dt\bigg|
    +\bigg|\int_0^T\int_\Omega(u_x^*)^2(y^n-y^*)\psi_x \,dx\,dt\bigg|
\\
=&\,I+II+III+IV
\end{align*}

Now, we deal with $I$
\begin{align*}
I
&=\bigg|\int_0^T\int_\Omega\bigg((u^n)^2-(u^*)^2\bigg)y^n\psi_x \,dx\,dt\bigg|
\\
&\leq\int_0^T\|(u^n)^2-(u^*)^2\|_{L^\infty} \|y^n\|_H\|\psi_x\|_H \,dt
\\
&\leq\|y^n\|_{C(0, T;H)}\|(u^n)^2-(u^*)^2\|_{L^2(0, T; L^\infty)}\|\psi\|_{L^2(0, T; V)}
\\
&\leq\|y^n\|_{C(0, T;H)}\|(y^n)^2-(y^*)^2\|_{L^2(0, T; L^\infty)}\|\psi\|_{L^2(0, T; V)}
\\
&\to 0,\quad \forall\psi\in L^2(0, T;  V).
\end{align*}

and
\begin{align*}
II
&=\bigg|\int_0^T\int_\Omega(u^*)^2(y^n-y^*)\psi_x \,dx\,dt\bigg|
\\
&\leq \int_0^T\|y^n-y^*\|_{L^\infty}\|u^*\|_{L^\infty}\|u^*\|_H\|\psi_x\|_H \,dt
\\
&\leq \int_0^T\|y^n-y^*\|_{L^\infty}\|y^*\|_H\|y^*\|_H\|\psi_x\|_H \,dt
\\
&\leq\|y^n\|^2_{C(0, T;H)}\|(y^n)^2-(y^*)^2\|_{L^2(0, T; L^\infty)}\|\psi\|_{L^2(0, T; V)}
\\
&\to 0,\quad \forall\psi\in L^2(0, T;  V).
\end{align*}
and
\begin{align*}
III
&=\bigg|\int_0^T\int_\Omega\bigg((u_x^n)^2-(u_x^*)^2 y^n\bigg)\psi_x \,dx\,dt\bigg|
\\
&\leq\int_0^T\|u_x^n+u_x^*\|_{L^\infty} \|u_x^n-u_x^*\|_{L^\infty}\|y^n\|_H\|\psi_x\|_H \,dt
\\
&\leq C\int_0^T(\|y^n\|_H+\|y^*\|_H) \|y^n-y^*\|_{L^\infty}\|y^n\|_H\|\psi_x\|_H \,dt
\\
&\leq C\big(\|y^n\|_{C(0, T;H)}+\|y^*\|_{C(0, T;H)}\big)
\\
&\qquad\|y^n\|_{C(0, T;H)}\|(u^n)^2-(u^*)^2\|_{L^2(0, T; L^\infty)}\|\psi\|_{L^2(0, T; V)}
\\
&\to 0,\quad \forall\psi\in L^2(0, T;  V).
\end{align*}
and
\begin{align*}
IV
&=\bigg|\int_0^T\int_\Omega(u^*)^2(y^n-y^*)\psi_x \,dx\,dt\bigg|
\\
&\leq \int_0^T\|y^n-y^*\|_{L^\infty}\|u_x^*\|_{L^\infty}\|u_x^*\|_H\|\psi_x\|_H \,dt
\\
&\leq \int_0^T\|y^n-y^*\|_{L^\infty}\|y^*\|_H\|y^*\|_H\|\psi_x\|_H \,dt
\\
&\leq\|y^n\|^2_{C(0, T;H)}\|(y^n)^2-(y^*)^2\|_{L^2(0, T; L^\infty)}\|\psi\|_{L^2(0, T; V)}
\\
&\to 0,\quad \forall\psi\in L^2(0, T;  V).
\end{align*}

On the other hand,
\begin{align*}
& \bigg|\int_0^T\int_\Omega k(u^n_x-u^*_x)\psi \,dx\,dt\bigg|
\\
=&\bigg|\int_0^T\int_\Omega k(u^n-u^*)\psi_x \,dx\,dt\bigg|
\\
\leq &\int_0^T|k|\|u^n-u^*\|_{L^\infty} \|\psi_x\|_H \,dt
\\
\leq &\int_0^T\|y^n-y^*\|_{L^\infty}\|\psi_x\|_H \,dt
\\
\leq&\|y^n-y^*\|_{L^2(0, T; L^\infty)}\|\psi\|_{L^2(0, T; V)}\to 0,\quad \forall\psi\in L^2(0, T;  V).
\end{align*}

In view of the above discussion, we get
\begin{align*}
e_1(y^{*},u^{*})=0,\quad \forall n\in N.
\end{align*}

As is known, $y^{*}\in W(0,T;V)$, we derive that $y^{*}(0)\in H$.
Since $y^n\rightarrow y^{*}$ weakly in $W(0,T;V)$, we can infer that
$y^n(0)\rightarrow y^{*}(0)$ weakly when $n\rightarrow\infty$. Thus,
we obtain
$$
(y^n(0)-y^{*}(0), \eta)\rightarrow
0,~~n\rightarrow\infty,~~\forall\eta \in H,
$$
which means
$$e_2(y^{*},\omega^{*})=0.
$$ Therefore, we obtain
$$
e(y^{*},\omega^{*})=0,\quad\hbox{in} ~~Y.
$$
So, there exists an optimal solution $(y^{*},u^{*})$ to problem
(\ref{4-1}).

Then, Theorem \ref{thm4.1} is proved.
\end{proof}

\section{First-order nececessary optimality condition}

Let $(y^*, \omega^*)\in X$ be an optimal solution to (\ref{2-3}). If the operator $e'(y^*, \omega^*)$ is surjective then there exist a Lagrange multiplier $\lambda^*\in L^2(V)$ and $\mu^*\in H$ satisfying the first-order necessary optimality condition
$$
L'_c(y^*, \omega^*,\lambda^*,\mu^*)=0,\quad e(y^{*},\omega^{*})=0,\quad\hbox{for} c\geq0.
$$

\begin{lemma}
\label{lem5.1}
The operator $e'(y,\omega)$ is surjective for all $(y,\omega)\in X$, i.e., for all $(g, h)\in Y$ there exists $(m, q)\in X$ such that
\begin{align}
&e_1(y,\omega)(m, q)=g \quad\hbox{in}\quad L^2(V) \label{5-0}
\\
&e_2(y,\omega)(m, q)=h \quad\hbox{in}\quad H \label{5-2}
\end{align}
\end{lemma}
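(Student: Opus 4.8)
The plan is to compute $e'(y,\omega)$ explicitly and then reduce the surjectivity to the solvability of a \emph{linear} parabolic initial--boundary value problem, which is handled by the Galerkin scheme already used for Theorem \ref{thm3.1}. Writing $u=(1-\partial_x^2)^{-1}y=p*y$ and, for a direction $(m,q)\in X$, setting $v=(1-\partial_x^2)^{-1}m=p*m$ (so $m=v-v_{xx}$), differentiation of $e_1,e_2$ at $(y,\omega)$ in the direction $(m,q)$ gives
\begin{align*}
e_1'(y,\omega)(m,q)&=(-\Delta)^{-1}\bigl(m_t-\epsilon m_{xx}+(u^2-u_x^2)m_x+(2uv-2u_xv_x)y_x+2v_xy^2+4u_xym+kv_x-Bq\bigr),
\\
e_2'(y,\omega)(m,q)&=m(\cdot,0).
\end{align*}
Since $-\Delta$ is an isomorphism of $V$ onto $V'$ compatible with the boundary conditions, and $g\in L^2(V)$ forces $\mathbb{A}g\in L^2(V')$ with $\mathbb{A}=-\Delta$, equation (\ref{5-0}) is equivalent to the corresponding PDE with $(-\Delta)^{-1}$ removed and right-hand side $\mathbb{A}g+Bq$. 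Hence, given $(g,h)\in Y$, it suffices to take $q=0$ and to produce $m\in W(0,T;V)$ with $m(\cdot,0)=h$ solving
\begin{align*}
m_t-\epsilon m_{xx}+(u^2-u_x^2)m_x+4u_xym+(2uv-2u_xv_x)y_x+2v_xy^2+kv_x=\mathbb{A}g;
\end{align*}
then $(m,0)\in X$ answers (\ref{5-0})--(\ref{5-2}).

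To construct $m$, I would run the Galerkin procedure of Theorem \ref{thm3.1}: look for $m_n=\sum_{i=1}^{n}m_i^n(t)\psi_i(x)\in V_n$ solving the projected equation with $m_n(0)=P_nh\to h$ in $H$; the resulting finite-dimensional system is linear, hence globally solvable on $[0,T]$. The a priori estimate is the linearized analogue of Step~2 in the proof of Theorem \ref{thm3.1}: testing the projected equation with $m_n$ and using that the coefficients come from the \emph{fixed} state $y\in W(0,T;V)\hookrightarrow C(H)$ --- so $\|u\|_{L^\infty},\|u_x\|_{L^\infty}\le C\|y\|_H$ and $\|v_n\|_{L^\infty},\|v_{n,x}\|_{L^\infty}\le C\|m_n\|_H$ by (\ref{3-16}), the one-dimensional embedding $\|y\|_{L^\infty}\le C\|y\|_V$, and the identity $\int_\Omega(u^2-u_x^2)m_{n,x}m_n\,dx=-\int_\Omega u_xy\,m_n^2\,dx$ --- every lower-order term is bounded by $\phi(t)\|m_n\|_H^2$ with $\phi\in L^1(0,T)$ (because $\|y\|_H\in L^\infty(0,T)$ and $\|y\|_V,\|y_x\|_H\in L^2(0,T)$), while the source term obeys $(\mathbb{A}g,m_n)_{V',V}\le\frac{\epsilon}{2}\|m_n\|_V^2+C\|g\|_V^2$ and is absorbed by the viscosity. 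Gronwall's inequality then gives $\|m_n\|_{L^\infty(H)}+\|m_n\|_{L^2(V)}\le C\bigl(\|h\|_H+\|g\|_{L^2(V)}\bigr)$, and the equation yields $\|m_{n,t}\|_{L^2(V')}\le C$, so $\{m_n\}$ is bounded in $W(0,T;V)$. Extracting a subsequence converging weakly in $W(0,T;V)$ and strongly in $L^2(H)$ and $L^2(0,T;L^\infty(\Omega))$ (the compact embeddings invoked in Theorem \ref{thm3.1}), one passes to the limit in every term --- routine, since the equation is linear in $m$ and $v$ --- to get a weak solution $m\in W(0,T;V)$; moreover $m(\cdot,0)=\lim m_n(0)=h$ since evaluation at $t=0$ is continuous on $W(0,T;V)$, and uniqueness (not needed here) follows from the same energy estimate. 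This produces $(m,0)\in X$ satisfying (\ref{5-0})--(\ref{5-2}), i.e.\ surjectivity of $e'(y,\omega)$.

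I do not expect a genuine obstacle: the equation is linear with coefficients controlled by the fixed data, so the task is the bookkeeping of the four extra terms $(2uv-2u_xv_x)y_x$, $2v_xy^2$, $4u_xym$, $kv_x$. The only slightly delicate point is the term carrying $y_x$, which is only in $L^2$ in $x$: one estimates $\int_\Omega(2uv-2u_xv_x)y_x m\,dx\le 2\bigl(\|u\|_{L^\infty}\|v\|_{L^\infty}+\|u_x\|_{L^\infty}\|v_x\|_{L^\infty}\bigr)\|y_x\|_H\|m\|_H\le C\|y\|_H\|y_x\|_H\|m\|_H^2$, pairing $y_x$ in $L^2$ with $m$ in $L^2$ and keeping the remaining factors in $L^\infty$ (rather than integrating by parts), with $\|y\|_H\|y_x\|_H\in L^1(0,T)$ as needed for Gronwall.
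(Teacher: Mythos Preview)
Your proposal is correct and follows the same overall route as the paper: set $q=0$ and reduce surjectivity to solving the linear parabolic initial--boundary value problem for $m$ with source $\mathbb{A}g$ and initial datum $h$. The paper streamlines your argument in two places. First, instead of keeping the expanded linearization with the terms $(2uv-2u_xv_x)y_x$, $2v_xy^2$, $4u_xym$, it exploits the divergence identity $(u^2-u_x^2)y_x+2u_xy^2=\partial_x\bigl[(u^2-u_x^2)y\bigr]$, so that after one integration by parts the bilinear form reads
\[
a(t;m,\xi)=\int_\Omega\bigl(\epsilon m_x\xi_x-2uvy\xi_x+2u_xv_xy\xi_x-u^2m\xi_x+u_x^2m\xi_x-kv\xi_x\bigr)\,dx,
\]
whose coefficients are controlled by $\|y\|_{C(H)}$ alone (no factor of $y_x$ ever appears); this yields a time-independent boundedness constant and a clean G{\aa}rding inequality $a(t;m,m)\ge\frac{\epsilon}{2}\|m\|_V^2-C(\|y\|_{C(H)})\|m\|_H^2$, as opposed to your $L^1(0,T)$ Gronwall coefficient coming from the $\|y_x\|_H$ factor. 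Second, once boundedness and G{\aa}rding coercivity are verified, the paper simply invokes the abstract Dautray--Lions existence theorem for linear parabolic problems rather than re-running the Galerkin limit by hand. Your direct Galerkin argument works just as well---it is exactly what underlies that abstract result---but the paper's packaging is shorter.
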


\begin{proof}
By the definition of the operator $e$, we know the Fr$\acute{e}$chet-derivatives of $e$ is
\begin{align*}
&(e'(y,\omega)(m,q),\xi)
\\
=&(m_t, \xi)+\epsilon m_x\xi_x-2uvy\xi_x+2u_xv_xy\xi_x-u^2m\xi_x+u^2_xm\xi_x-kv\xi_x-Bq\xi
\end{align*}
where $y=u-u_{xx}$ and $m=v-v_{xx}$

We define a bilinear form $a(t;\cdot,\cdot): V\times V \rightarrow \mathbb{R}$ for $t\in[0, T] a.e.$ by
$$
a(t;m,\xi)=\int_\Omega(\epsilon m_x\xi_x-2uvy\xi_x+2u_xv_xy\xi_x-u^2m\xi_x+u^2_xm\xi_x-kv\xi_x)dx
$$

Set $q=0$, equation equals to
\begin{equation}
\label{5-3}
 \frac d{dt}(m(t),\xi)+a(t;m,\xi)=(-\triangle g(t),\xi)_{V^*,V}
\end{equation}
for all $\xi\in V$ and $t\in[0, T] a.e.$ and the initial condition
\begin{equation}
\label{5-4}
m(0)=h.
\end{equation}
Next we will prove that there exists a solution to the problem

Applying (\ref{3-16}), we have
\begin{align}
\label{5-5}
&\|u\|_{L^\infty}\leq 2\|y\|_H,\qquad \|u_x\|_{L^\infty}\leq 2\|y\|_H,
\\
\label{5-6}
&\|v\|_{L^\infty}\leq 2\|m\|_H,\qquad\|v_x\|_{L^\infty}\leq 2\|m\|_H.
\end{align}

Thus,using integration by parts and H\"{o}lder's inequality, we have
\begin{align}
\label{5-7}
&  |a(t;m,\xi)|
\nonumber
\\
=&\int_\Omega|\epsilon m_x\xi_x-2uvy\xi_x+2u_xv_xy\xi_x-u^2m\xi_x+u^2_xm\xi_x-kv\xi_x|dx
\nonumber
\\
\leq&\epsilon\|m\|_V\|\xi\|_V+2\|u\|_{L^\infty}\|v\|_{L^\infty}\|y\|_H\|\xi\|_V+2\|u_x\|_{L^\infty}\|v_x\|_{L^\infty}\|y\|_H\|\xi\|_V
\nonumber
\\
&+\|u\|^2_{L^\infty}\|m\|_H\|\xi\|_V+\|u_x\|^2_{L^\infty}\|m\|_H\|\xi\|_V+|k|\|v\|_{L^\infty}\|\xi\|_V
\nonumber
\\
\leq&(\epsilon+|k|)\|m\|_V\|\xi\|_V+4\|y\|^2_H\|m\|_H\|\xi\|_V
\nonumber
\\
\leq&(\epsilon+|k|+C\|y\|^2_{C(H)})\|m\|_V\|\xi\|_V
\end{align}

In the similar way, we obtain that
\begin{align}
\label{5-8}
&  a(t;m,m)
\nonumber
\\
=&\int_\Omega(\epsilon m^2_x-2uvym_x+2u_xv_xym_x-u^2mm_x+u^2_xmm_x-kvm_x)dx
\nonumber
\\
\geq&\epsilon\|m\|^2_V-2\|u\|_{L^\infty}\|v\|_{L^\infty}\|y\|_H\|m\|_V-2\|u_x\|_{L^\infty}\|v_x\|_{L^\infty}\|y\|_H\|m\|_V
\nonumber
\\
&-\|u\|^2_{L^\infty}\|m\|_H\|m_x\|_H-\|u_x\|^2_{L^\infty}\|m\|_H\|m_x\|_H-|k|\|v\|_{L^\infty}\|m_x\|_H
\nonumber
\\
\geq&\epsilon\|m\|^2_V-(6\|y\|^2_H+|k|)\|m\|_H\|m\|_V
\nonumber
\\
\geq&\epsilon\|m\|^2_V-(\frac{48}{\epsilon}\|y\|^4_H\|m\|^2_H+\frac{8|k|}{\epsilon}\|m\|^2_H+\frac{\epsilon}2\|m\|^2_V)
\nonumber
\\
\geq&\frac{\epsilon}2\|m\|^2_V-\frac{24}{\epsilon}\|y\|^4_{C(H)}\|m\|^2_H-\frac{8|k|}{\epsilon}\|m\|^2_H.
\end{align}
Therefore, combing (\ref{5-7}) with (\ref{5-8}),we obtain the existence of an element $m\in(W(V))$ satisfying (\ref{5-0}) and (\ref{5-2})\cite{lions}.
\end{proof}

\begin{theorem}
\label{thm5.1}
Let $(y^*,\omega^*)\in X$ be an optimal solution to (\ref{2-3}), then there exist unique Lagrange multipliers $\lambda^*\in L^2(V)$
and $\mu^*\in H$ satisfying the first-order necessary optimality condition:
\begin{align}
&\frac{\partial L_c}{\partial y}(y^*, \omega^*, \lambda^*, \mu^*)=0\quad\hbox{in}\quad W(V)\label{5-9}
\\
&\frac{\partial L_c}{\partial \omega}(y^*, \omega^*, \lambda^*, \mu^*)=0\quad\hbox{in}\quad L^2(Q_0)\label{5-10}
\end{align}
\end{theorem}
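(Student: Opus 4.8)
The plan is to apply the standard Lagrange multiplier theorem in Banach spaces (as in Zowe--Kurcyusz or Lions' framework) to the equality--constrained minimization problem $\min J(y,\omega)$ subject to $e(y,\omega)=0$. Since $(y^*,\omega^*)$ is an optimal solution, $J$ and $e$ are twice continuously Fr\'echet-differentiable on $X$, and by Lemma \ref{lem5.1} the derivative $e'(y^*,\omega^*):X\to Y$ is surjective, the abstract theorem yields the existence of multipliers $\lambda^*\in L^2(V)$ and $\mu^*\in H$ (the components of an element of $Y'\cong Y$, using the Riesz identifications $L^2(V)'\cong L^2(V)$ and $H'\cong H$ already fixed in Section \ref{sec2}) such that the Lagrangian $L_0$ is stationary at $(y^*,\omega^*)$. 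Because the penalty term $\tfrac{c}{2}\|e(y,\omega)\|_Y^2$ and its first derivative vanish at any feasible point (in particular at $(y^*,\omega^*)$, where $e(y^*,\omega^*)=0$), we have $L_c'(y^*,\omega^*,\lambda^*,\mu^*)=L_0'(y^*,\omega^*,\lambda^*,\mu^*)$, so the stationarity equations \eqref{5-9}--\eqref{5-10} hold for every $c\ge 0$; this is the observation that lets us pass freely to the augmented functional needed in Section 6.

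Concretely, I would first write out $L_c'(y^*,\omega^*,\lambda^*,\mu^*)(m,q)=0$ for all $(m,q)\in X$, then split the test directions: taking $q=0$ and $m$ arbitrary in $W(V)$ gives \eqref{5-9}, namely
\begin{align*}
J_y(y^*,\omega^*)m+(e_1'(y^*,\omega^*)m,\lambda^*)_{L^2(V)}+(e_2'(y^*,\omega^*)m,\mu^*)_H=0
\quad\text{for all } m\in W(V),
\end{align*}
and taking $m=0$ and $q$ arbitrary in $L^2(Q_0)$ gives \eqref{5-10},
\begin{align*}
\delta(\omega^*,q)_{L^2(Q_0)}-(Bq,\lambda^*)_{L^2(V)}=0
\quad\text{for all } q\in L^2(Q_0),
\end{align*}
where I have used $e_1'(y^*,\omega^*)(0,q)=-(-\Delta)^{-1}Bq$ and $e_2'(y^*,\omega^*)(0,q)=0$. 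These two variational identities are exactly \eqref{5-9}--\eqref{5-10} after identifying the dual pairings with the stated norms.

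For uniqueness, suppose $(\lambda_1,\mu_1)$ and $(\lambda_2,\mu_2)$ both satisfy the optimality system; then their difference $(\bar\lambda,\bar\mu):=(\lambda_1-\lambda_2,\mu_1-\mu_2)$ satisfies $(e_1'(y^*,\omega^*)m,\bar\lambda)_{L^2(V)}+(e_2'(y^*,\omega^*)m,\bar\mu)_H=0$ for all $(m,q)\in X$, i.e. $\bar\lambda$ and $\bar\mu$ annihilate the range of $e'(y^*,\omega^*)$; since that range is all of $Y$ by Lemma \ref{lem5.1}, we conclude $(\bar\lambda,\bar\mu)=0$. The main obstacle is purely bookkeeping rather than analytic: one must verify that the adjoint equation obtained from \eqref{5-9} — a backward linear parabolic problem for $\lambda^*$ with terminal condition coming from the $\mu^*$ term and source $G^*(Gy^*-z_d)$ — actually has its solution in $L^2(V)$ with the regularity needed to make the pairings meaningful, which is where the coercivity-type estimates \eqref{5-7}--\eqref{5-8} established for $a(t;\cdot,\cdot)$ in Lemma \ref{lem5.1} are reused (the bilinear form and its transpose satisfy the same G\r{a}rding inequality). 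Beyond that, the argument is a direct citation of the Lagrange multiplier rule together with the surjectivity already proved, so no genuinely new estimate is required here.
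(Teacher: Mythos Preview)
Your argument is correct, and for the existence part it coincides with the paper's: both invoke the abstract Lagrange multiplier rule, using the surjectivity of $e'(y^*,\omega^*)$ from Lemma~\ref{lem5.1}, and the observation that the penalty term contributes nothing to $L_c'$ at a feasible point is exactly what justifies stating the conditions for all $c\ge 0$.

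Where you diverge is in the uniqueness proof. You argue abstractly: any difference $(\bar\lambda,\bar\mu)$ of two multiplier pairs annihilates the range of $e'(y^*,\omega^*)$, and since that range is all of $Y$ by Lemma~\ref{lem5.1}, the difference vanishes. This is clean and requires nothing beyond the surjectivity already established. The paper instead unpacks \eqref{5-9} explicitly: it integrates by parts in the pairing with $\lambda^*$, reads off that $\lambda^*$ is the weak solution of a backward linear parabolic problem with terminal datum $\lambda^*(T)=0$ and that $\mu^*=\lambda^*(0)$, then time-reverses to a forward problem whose well-posedness (via the same G\aa rding-type estimates as in Lemma~\ref{lem5.1}) gives uniqueness. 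Your route is shorter and conceptually tighter; the paper's route buys something you only gesture at---it produces the concrete adjoint system \eqref{5-16}--\eqref{5-17} and the identification $\mu^*=\lambda^*(0)$, both of which are used immediately afterward in Lemma~\ref{lem6.1} to obtain the quantitative bound \eqref{6-1} on $\|\lambda^*\|_{L^2(V)}$ that drives the second-order conditions. So if your goal is only Theorem~\ref{thm5.1} as stated, your proof suffices; if you are writing toward Section~6, you will eventually have to derive the adjoint equation anyway, and the paper simply does that work here rather than later.
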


\begin{proof}
Theorem \ref{thm3.1} and Lemma \ref{lem5.1} imply directly that there exist a Lagrange multipliers $\lambda^*\in L^2(V)$
and $\mu^*\in H$ satisfying the first-order necessary optimality condition.

Now, we are going to prove the uniqueness.Recall that $v=(1-\partial_x^2)^{-1}m$ and $u=(1-\partial_x^2)^{-1}y$, then Equations (\ref{5-9}) are equivalent to
\begin{align}
\label{5-11}
0=&(Cy^*-z, Cm)_S+\int_0^T(m_t(t), \lambda^*(t))_{V^*, V}dt+\int_\Omega m(0)\mu^*dx
\nonumber
\\
\qquad&+\int_0^T\int_\Omega(\epsilon m_x\lambda^*_x-2u^*vy^*\lambda^*_x+2u^*_xv_xy^*\lambda^*_x-u^2m\lambda^*_x+kv\lambda^*_x+(u^*)^2_xm\lambda^*_x)dxdt
\nonumber
\\
=&V+VI+VII+VIII
\end{align}
By using intergration by parts, we have
\begin{align}
\label{5-12}
V&=\big(C^\star(Cy^*-z), (1-\partial^2)^{-1}v\big)_{L^2(V^*), L^2(V)}
\nonumber
\\
&=\big((1-\partial^2)C^\star(Cy^*-z), v\big)_{L^2(V^*), L^2(V)}
\end{align}

Furthermore, we rewrite the VI and VII on the right side of \label{5-1} in the following way:
\begin{align}
\label{5-13}
&\qquad VI+VII
\nonumber
\\
&=\int_0^T\frac d{dt}(\lambda^*, m(t))_Hdt+\int_0^T(-\lambda_t^*, m(t))_{V^*, V}dt+(\mu^*, m(0))_H
\nonumber
\\
&=\int_0^T\frac d{dt}(\lambda^*, m(t))_Hdt-(\lambda_t^*, m(t))_{L^2(V^*), L^2(V)}+(\mu^*, m(0))_H
\nonumber
\\
&=\int_0^T\frac d{dt}(\lambda^*, m(t))_Hdt-(\lambda_t^*-\lambda_{txx}^*, v(t))_{L^2(V^*), L^2(V)}dt+(\mu^*, m(0))_H
\nonumber
\\
&=\big(\lambda^*(T), m(T)\big)_H-\big((1-\partial^2_x)\lambda_t^*, v(t)\big)_{L^2(V^*), L^2(V)}+(\mu^*-\lambda^*(0), m(0))_H.
\end{align}

Finally, using integration by parts, we have
\begin{align}
\label{5-14}
VIII=&\bigg((1-\partial^2_x)(-\epsilon\lambda^*_{xx}-(u^*)^2\lambda^*_x+(u^*_x)^2\lambda^*_x)
\nonumber
\\
&\qquad\qquad+2u^*y^*\lambda^*_x-2u^*_{xx}y^*\lambda^*_x-2u^*_xy^*_x\lambda^*_x-2u^*_xy^*\lambda^*_{xx}-k\lambda^*_x, v\bigg)
\end{align}

Form (\ref{5-11})-(\ref{5-14}), we have
\begin{align}
\label{5-15}
0=&\bigg(\lambda^*(T), m(T)\bigg)_H+\bigg(\mu^*-\lambda^*(0), m(0)\bigg)_H
\nonumber
\\
&\quad+\bigg((1-\partial^2_x)(C^\star(Cy^*-z)-\lambda^*_t-\epsilon\lambda^*_{xx}-(u^*)^2\lambda^*_x+(u^*_x)^2\lambda^*_x)
\nonumber
\\
&\qquad+2u^*y^*\lambda^*_x-2u^*_{xx}y^*\lambda^*_x-2u^*_xy^*_x\lambda^*_x-2u^*_xy^*\lambda^*_{xx}-k\lambda^*_x, v(t)\bigg)_{L^2(V^*), L^2(V)}
\end{align}

Choosing $m$ appropriately we claim that
$$
\lambda^*(T)=0\quad\hbox{and}\quad\mu^*=\lambda^*(0).
$$
Thus, we conclude from (\ref{5-15}) that $\lambda^*$ is the weak solution of the backward differential equation
\begin{align}
\label{5-16}
&\lambda^*_t+\epsilon\lambda^*_{xx}
\nonumber
\\
=&C^\star(Cy^*-z)-(u^*)^2\lambda^*_x+(u^*_x)^2\lambda^*_x
\nonumber
\\
&\quad-(1-\partial^2_x)^{-1}(-2u^*y^*\lambda^*_x+2u^*_{xx}y^*\lambda^*_x+2u^*_xy^*_x\lambda^*_x+2u^*_xy^*\lambda^*_{xx}+k\lambda^*_x)
\end{align}
with the intial condition at $t=T$
\begin{equation}
\label{5-17}
\lambda^*(T)=0.
\end{equation}
Let $\tau=T-t$ and $\rho^*(\tau)=\lambda(t)$ for $t\in[0, T]$. Then equation (\ref{5-16}) and (\ref{5-17}) can be transformed into the
forward differential equation
\begin{align}
\label{5-18}
&\rho^*_\tau-\epsilon\rho^*_{xx}
\nonumber
\\
=&-C^\star(Cy^*-z)+(u^*)^2\rho^*_x-(u^*_x)^2\rho^*_x
\nonumber
\\
&\quad+(1-\partial^2_x)^{-1}(-2u^*y^*\rho^*_x+2u^*_{xx}y^*\rho^*_x+2u^*_xy^*_x\rho^*_x+2u^*_xy^*\rho^*_{xx}+k\rho^*_x)
\end{align}
with the intial condition at $t=T$
\begin{equation}
\label{5-19}
\lambda^*(T)=0.
\end{equation}
for all $m=v-v_{xx}\in W(V)$, where $y^*=u^*-u^*_{xx}$. Meanwhile Equation (\ref{5-10}) implies that $\lambda^*=\sigma \omega^*$ in $Q_0$
The existence of a unique $\rho^*$ solving problem (\ref{5-18}) and (\ref{5-19}) is analogously shown as the proof of the existence of a (unique) $v$ solving problem. Consequently, $\lambda^*(t)=\rho(\tau)\in W(V)$ is the unique solution of the problem, and $\mu^*$ is given by $\mu^*=\lambda^*(0)$.
\end{proof}

\section{Second-order sufficient optimality condition}

In this section, we prove the second-order sufficient optimality conditions provided for problem
 (\ref{2-3}) for two choices of the operator $G$ and for the Hilbert space $S$.

First of all, we need to derive an estimate for the lagrange-multiplier $\lambda$.
\begin{lemma}
\label{lem6.1}
The lagrange multiplier $\lambda^*$ satisfies $\lambda^*\in W(V)$ and
\begin{equation}
\label{6-1}
\|\lambda^*\|_{L^2(V)}^2\leq\frac4{3\epsilon}\exp(c_0\tau)\|C^\star(Cy^*-z)\|_{L^2(V^*)}.
\end{equation}
\end{lemma}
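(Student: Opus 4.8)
The plan is to obtain the estimate for $\lambda^*$ by deriving an energy identity from the forward parabolic problem (\ref{5-18})--(\ref{5-19}) satisfied by $\rho^*(\tau)=\lambda^*(T-\tau)$. First I would test equation (\ref{5-18}) with $\rho^*$ in the $V$--$V^*$ duality pairing and integrate over $\Omega$. The viscosity term $-\epsilon\rho^*_{xx}$ produces $\epsilon\|\rho^*\|_V^2$ (after incorporating the boundary conditions, exactly as in Step 1 of the proof of Theorem \ref{thm3.1}), while the left-hand side time-derivative term gives $\tfrac12\tfrac{d}{d\tau}\|\rho^*\|_H^2$. The source term $-C^\star(Cy^*-z)$ is bounded via Young's inequality by $\tfrac1{\epsilon}\|C^\star(Cy^*-z)\|_{V^*}^2+\tfrac{\epsilon}{4}\|\rho^*\|_V^2$.

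Next I would bound the remaining transport-type terms on the right-hand side of (\ref{5-18}), namely $(u^*)^2\rho^*_x-(u^*_x)^2\rho^*_x$ and the terms produced by $(1-\partial_x^2)^{-1}(\cdots)$. Here I would use that $y^*\in W(V)\hookrightarrow C(H)$ so that $\|u^*\|_{L^\infty},\|u^*_x\|_{L^\infty}\le C\|y^*\|_H\le C\|y^*\|_{C(H)}$ by (\ref{3-16}), and that $(1-\partial_x^2)^{-1}$ is bounded from $H$ (indeed from $V^*$) into $V$ with norm controlled by $\|p\|_{L^1}=1$. Each such term, after integrating by parts where a derivative falls on $\rho^*$, is estimated by $C(1+\|y^*\|_{C(H)}^2)\|\rho^*\|_H\|\rho^*\|_V$, and then split by Young's inequality into $\tfrac{\epsilon}{4}\|\rho^*\|_V^2$ plus $\tfrac{C}{\epsilon}(1+\|y^*\|_{C(H)}^4)\|\rho^*\|_H^2$; the constant multiplying $\|\rho^*\|_H^2$ is the $c_0$ appearing in (\ref{6-1}). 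Collecting everything yields
\begin{align*}
\frac{d}{d\tau}\|\rho^*(\tau)\|_H^2+\frac{3\epsilon}{2}\|\rho^*(\tau)\|_V^2\leq c_0\|\rho^*(\tau)\|_H^2+\frac{2}{\epsilon}\|C^\star(Cy^*-z)(\tau)\|_{V^*}^2.
\end{align*}

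Then I would apply Gronwall's inequality to the differential inequality, using the initial condition $\rho^*(0)=\lambda^*(T)=0$ from (\ref{5-19}), to get $\|\rho^*(\tau)\|_H^2\leq \tfrac{2}{\epsilon}\exp(c_0\tau)\int_0^\tau\|C^\star(Cy^*-z)\|_{V^*}^2$; feeding this pointwise bound back into the integrated form of the energy inequality and absorbing constants gives the claimed bound $\|\lambda^*\|_{L^2(V)}^2\leq\tfrac4{3\epsilon}\exp(c_0\tau)\|C^\star(Cy^*-z)\|_{L^2(V^*)}$, and the same energy estimate together with the bound on $\|\lambda^*_t\|_{L^2(V^*)}$ obtained by duality from (\ref{5-18}) shows $\lambda^*\in W(V)$. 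The main obstacle will be the careful treatment of the nonlocal terms coming from $(1-\partial_x^2)^{-1}$ acting on expressions containing $\rho^*_{xx}$ (the term $2u^*_xy^*\rho^*_{xx}$): one must move the operator $(1-\partial_x^2)^{-1}$ and then the second derivative off of $\rho^*$ using integration by parts inside the duality pairing so that at most one derivative of $\rho^*$ remains, which is what keeps the estimate closed in the $H$--$V$ scale; everything else is a routine Hölder/Young bookkeeping using the $L^\infty$ bounds on $u^*,u^*_x$.
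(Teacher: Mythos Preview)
Your plan is correct and leads to the same conclusion as the paper, but the organization differs in one respect worth noting. Instead of deriving a differential inequality for $\|\rho^*(\tau)\|_H^2$ and then invoking Gronwall followed by re-integration, the paper makes the substitution $\rho^*(\tau)=\theta^*(\tau)\exp(c_0\tau)$ at the outset; testing the equation for $\theta^*$ with $\theta^*$ produces an additional $+c_0\|\theta^*\|_{L^2(H)}^2$ on the left-hand side, and $c_0$ is then \emph{chosen} to absorb exactly the accumulated $\|\theta^*\|_{L^2(H)}^2$ contributions coming from the transport and nonlocal terms, so that one reads off $\|\theta^*\|_{L^2(V)}$ directly without a separate Gronwall step. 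The two devices are of course equivalent---the exponential weight is nothing but the integrating factor that Gronwall would supply---and the resulting bounds agree up to inessential constants. Your identification of the term $2u^*_xy^*\rho^*_{xx}$ inside $(1-\partial_x^2)^{-1}$ as the delicate point, and your proposal to shift $(1-\partial_x^2)^{-1}$ across the pairing and integrate by parts so that at most one derivative remains on $\rho^*$, is in fact more careful than the paper's argument in (\ref{6-7}), which passes from the $L^1$ norm of the full bracket to $\|\theta^*\|_V$ without explaining how the second derivative is controlled. The duality estimate for $\|\lambda^*_t\|_{L^2(V^*)}$ that you outline at the end matches the paper's closing step (\ref{6-11}).
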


\begin{proof}
If $u^*=0$ in (\ref{5-18}), we can obtain that
\begin{equation*}
\label{6-2}
\|\lambda^*\|_{L^2(V)}=\|\rho^*\|_{L^2(V)}\leq \frac1{\epsilon}\|C^\star(Cy^*-z)\|_{L^2(V^*)}.
\end{equation*}

If $u^*\neq0$, then we set $\rho^*(\tau)=\theta^*(\tau)\exp(c_0\tau)$. From (\ref{5-18}) we derive
\begin{align}
\label{6-3}
&\epsilon\|\theta^*\|_{L^2(V)}^2+c_0\|\theta^*\|_{L^2(H)}
\nonumber
\\
\leq& \|C^\star(Cy^*-z)\|_{L^2(V^*)}\|\theta^*\|_{L^2(V)}
\nonumber
\\
+&\int_0^T\int_\Omega|(u^*)^2\theta^*_x\theta^*|dxdt+\int_0^T\int_\Omega|(u_x^*)^2\theta^*_x\theta^*|dxdt
\nonumber
\\
+&\int_0^T\int_\Omega|\big((1-\partial^2_x)^{-1}(-2u^*y^*\theta^*_x+2u^*_{xx}y^*\theta^*_x+2u^*_xy^*_x\theta^*_x
+2u^*_xy^*\theta^*_{xx}+k\theta^*_x)\big)\theta^*|dxdt
\end{align}
Note that
\begin{equation*}
\|u^*\|_{L^\infty}\leq 2\|y^*\|_H,\qquad \|u_x^*\|_{L^\infty}\leq \frac12\|y^*\|_H,
\end{equation*}

By the H\"{o}ler's inequality and Young's inequality, we obtain
\begin{align}
\label{6-5}
\int_0^T\int_\Omega|(u^*)^2\theta^*_x\theta^*|dxdt&\leq \int_0^T\|u^*\|_{L^\infty}^2\|\theta^*\|_V\|\theta^*\|_Hdt
\nonumber
\\
&\leq4\int_0^T\|y^*\|_H^2\|\theta^*\|_V\|\theta^*\|_Hdt
\nonumber
\\
&\leq4\|y^*\|_{C(H)}^2\|\theta^*\|_{L^2(V)}\|\theta^*\|_{L^2(H)}
\nonumber
\\
&\leq\frac{16}{\epsilon}\|y^*\|_{C(H)}^4\|\theta^*\|^2_{L^2(H)}+\frac\epsilon4\|\theta^*\|_{L^2(V)}.
\end{align}
and
\begin{align}
\label{6-6}
\int_0^T\int_\Omega|(u^*_x)^2\theta^*_x\theta^*|dxdt&\leq \int_0^T\|u^*_x\|_{L^\infty}^2\|\theta^*\|_V\|\theta^*\|_Hdt
\nonumber
\\
&\leq\frac14\int_0^T\|y^*\|_H^2\|\theta^*\|_V\|\theta^*\|_Hdt
\nonumber
\\
&\leq\frac14\|y^*\|_{C(H)}^2\|\theta^*\|_{L^2(V)}\|\theta^*\|_{L^2(H)}
\nonumber
\\
&\leq\frac1{16\epsilon}\|y^*\|_{C(H)}^4\|\theta^*\|^2_{L^2(H)}+\frac\epsilon4\|\theta^*\|_{L^2(V)}.
\end{align}
Now, we will deal with the last term of (\ref{6-3})
\begin{align}
\label{6-7}
&\int_0^T\int_\Omega|(1-\partial^2_x)^{-1}(-2u^*y^*\theta^*_x+2u^*_{xx}y^*\theta^*_x+2u^*_xy^*_x\theta^*_x+2u^*_xy^*\theta^*_{xx}+k\theta^*_x)\theta^*|dt
\nonumber
\\
\leq&\int_0^T\|(1-\partial^2_x)^{-1}(-2u^*y^*\theta^*_x+2u^*_{xx}y^*\theta^*_x+2u^*_xy^*_x\theta^*_x+2u^*_xy^*\theta^*_{xx})\|_H\|\theta^*\|_Hdt
\nonumber
\\
=&\int_0^T\|G*(-2u^*y^*\theta^*_x+2u^*_{xx}y^*\theta^*_x+2u^*_xy^*_x\theta^*_x+2u^*_xy^*\theta^*_{xx})\|_H\|\theta^*\|_Hdt
\nonumber
\\
\leq &\int_0^T\|G\|_H\|(-2u^*y^*\theta^*_x+2u^*_{xx}y^*\theta^*_x+2u^*_xy^*_x\theta^*_x+2u^*_xy^*\theta^*_{xx})\|_{L^1}\|\theta^*\|_Hdt
\nonumber
\\
\leq &2\int_0^T\|(-2u^*y^*\theta^*_x+2u^*_{xx}y^*\theta^*_x+2u^*_xy^*_x\theta^*_x+2u^*_xy^*\theta^*_{xx})\|_{L^1}\|\theta^*\|_Hdt
\nonumber
\\
\leq&\int_0^T\big(4\|u^*\|_{L^\infty}\|y^*\|_H\big)\|\theta^*\|_V\|\theta^*\|_Hdt
\nonumber
\\
\leq&\int_0^T\big(4\|y^*\|^2_H\big)\|\theta^*\|_V\|\theta^*\|_Hdt
\nonumber
\\
\leq&\frac{16}\epsilon\|y^*\|^2_H\|\theta^*\|^2_{L^2(H)}+\frac\epsilon4\|\theta^*\|_V
\end{align}
Inserting (\ref{6-5})-(\ref{6-7}) into (\ref{5-11})yields
\begin{align}
\label{6-8}
&\epsilon\|\theta^*\|_{L^2(V)}^2+c_0\|\theta^*\|_{L^2(H)}
\nonumber
\\
\leq&\|C^\star(Cy^*-z)\|_{L^2(V^*)}\|\theta^*\|_{L^2(V)}
+\frac{8+1/16}{\epsilon}\|y^*\|_{C(H)}^4\|\theta^*\|^2_{L^2(H)}+\frac{3\epsilon}4\|\theta^*\|_{L^2(V)}
\end{align}
where $c_0=\frac{8+1/16}{\epsilon}\|y^*\|_{C(H)}^4$, then we obtain
\begin{equation*}
\|\theta^*\|_{L^2(V)}^2\leq\frac4{3\epsilon}\|C^\star(Cy^*-z)\|_{L^2(V^*)}
\end{equation*}

From $\rho^*(\tau)=\lambda^*(t)$ and $\rho^*(\tau)=\theta^*(\tau)\exp(c_0\tau)$, we have
\begin{align}
\label{6-10}
\|\lambda^*\|_{L^2(V)}^2\leq\frac4{3\epsilon}\exp(c_0\tau)\|C^\star(Cy^*-z)\|_{L^2(V^*)}.
\end{align}
From (\ref{6-7}), we derive that
\begin{align}
\label{6-11}
&\|\lambda^*_t\|_{L^2(V^*)}
\nonumber
\\
=&\underset{\|\varphi\|_{L^2(V)=1}}\sup\bigg|\int_0^T\int_\Omega
(-\epsilon\lambda^*_{xx}\varphi+C^\star(Cy^*-z)\varphi-(u^*)^2\lambda^*_x\varphi+(u^*_x)^2\lambda^*_x\varphi
\nonumber
\\
&-((1-\partial^2_x)^{-1}(-2u^*y^*\lambda^*_x+2u^*_{xx}y^*\lambda^*_x+2u^*_xy^*_x\lambda^*_x+2u^*_xy^*\lambda^*_{xx}))\varphi)dxdt\bigg|
\nonumber
\\
\leq&\epsilon\|\lambda\|_{L^2(V)}+\|C^\star(Cy^*-z)\varphi\|_{L^2(V^*)}+(8+\frac14)\|y^*\|_{C(H)}^2\|\lambda\|_{L^2(V)}
\nonumber
\\
=&\|C^\star(Cy^*-z)\varphi\|_{L^2(V^*)}+(\epsilon+(8+\frac14))\|y^*\|_{C(H)}^2\|\lambda\|_{L^2(V)}
\end{align}
Finally, we infer (\ref{6-10}) and (\ref{6-11}) that $\lambda\in W(V)$. We complete the proof of the lemma.
\end{proof}

To obtain the second-order sufficient optimality condition, we need an auxiliary lemma.

\begin{lemma}
\label{lem6.2}
Let $(y, \omega)\in X$ and $(m, q)\in \ker e'(y, \omega)$. Then there exists a constant $c_1>0$ such that
\begin{equation*}
\label{6-12}
\|m\|_{W(V)}^2\leq c_1\|q\|_{L^2(Q_0)}^2.
\end{equation*}
\end{lemma}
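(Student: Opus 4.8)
The plan is to exploit the fact that $(m,q)\in\ker e'(y,\omega)$ means precisely that $m$ solves the linearised state equation \eqref{5-3}--\eqref{5-4} with right-hand side $(-\Delta g,h)$ replaced by the pair generated by the control $Bq$ and zero initial datum; that is, $m$ satisfies
\begin{equation*}
\frac{d}{dt}(m(t),\xi)+a(t;m,\xi)=(Bq,\xi)_{V^*,V},\qquad m(0)=0,
\end{equation*}
for all $\xi\in V$ and a.e.\ $t\in[0,T]$. The estimate \eqref{6-12} is then nothing but the energy estimate for this linear parabolic problem, and it is obtained by the same scheme used in Theorem \ref{thm3.1} and in Lemma \ref{lem5.1}, but now without any smallness restriction because the problem is linear in $m$.

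First I would test the linearised equation with $\xi=m(t)$ and use the coercivity-type lower bound \eqref{5-8} for $a(t;m,m)$, namely
\begin{equation*}
a(t;m,m)\ge \frac{\epsilon}{2}\|m\|_V^2-\Big(\frac{24}{\epsilon}\|y\|_{C(H)}^4+\frac{8|k|}{\epsilon}\Big)\|m\|_H^2 ,
\end{equation*}
together with the bound $|(Bq,m)_{V^*,V}|\le \tfrac{1}{\epsilon}\|Bq\|_{V^*}^2+\tfrac{\epsilon}{4}\|m\|_V^2$ and $\|Bq\|_{L^2(V^*)}\le C\|q\|_{L^2(Q_0)}$ from the boundedness of $B$. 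This yields
\begin{equation*}
\frac{d}{dt}\|m(t)\|_H^2+\frac{\epsilon}{2}\|m(t)\|_V^2\le c_0(t)\|m(t)\|_H^2+C\|q\|_{L^2(Q_0)}^2 ,
\end{equation*}
with $c_0(t)\in L^1(0,T)$ controlled by $\|y\|_{C(H)}^4$ (finite since $y\in W(V)\hookrightarrow C(H)$). Grönwall's inequality, using $m(0)=0$, then gives $\|m\|_{C(H)}^2\le C\|q\|_{L^2(Q_0)}^2$, and integrating the differential inequality in $t$ gives $\|m\|_{L^2(V)}^2\le C\|q\|_{L^2(Q_0)}^2$.

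It remains to estimate $\|m_t\|_{L^2(V^*)}$. For this I would, exactly as in \eqref{3-20} and \eqref{6-11}, bound $m_t$ in $V^*$ by testing the linearised equation against $\varphi\in V$ with $\|\varphi\|_V=1$: the term $\epsilon m_x\varphi_x$ contributes $\epsilon\|m\|_V$, each of the lower-order terms in $a(t;m,\varphi)$ contributes a constant multiple of $(\|y\|_{C(H)}+\|y\|_{C(H)}^2)\|m\|_V$ after using \eqref{5-5}--\eqref{5-6} and $\|\varphi\|_{L^\infty}\le\|\varphi\|_V$, and the control term contributes $\|Bq\|_{V^*}$. Squaring and integrating in time gives $\|m_t\|_{L^2(V^*)}^2\le C\big(\|m\|_{L^2(V)}^2+\|q\|_{L^2(Q_0)}^2\big)\le C\|q\|_{L^2(Q_0)}^2$. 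Adding the three pieces and setting $c_1$ equal to the resulting constant (which depends on $\epsilon$, $k$, $T$, $\|B\|$ and $\|y\|_{C(H)}$) proves $\|m\|_{W(V)}^2\le c_1\|q\|_{L^2(Q_0)}^2$.

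The only delicate point is bookkeeping: one must check that $(m,q)\in\ker e'(y,\omega)$ really does force $m(0)=0$ (this is the content of $e_2'(y,\omega)(m,q)=0$) and that the first component $e_1'(y,\omega)(m,q)=0$, after cancelling the $(-\Delta)^{-1}$, is exactly the weak linearised equation with source $Bq$; both are immediate from the formula for $e'$ given just before \eqref{5-3}. The main obstacle is therefore not conceptual but simply making sure every lower-order term in the bilinear form is absorbed with the right power of $\|y\|_{C(H)}$ and with a small enough coefficient of $\|m\|_V^2$ so that the $\tfrac{\epsilon}{2}\|m\|_V^2$ on the left survives; this is precisely the computation already carried out in \eqref{5-7}--\eqref{5-8}, so it can be quoted rather than redone.
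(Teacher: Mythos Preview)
Your proposal is correct and follows the same overall strategy as the paper: test the linearised equation with $\xi=m$, absorb the lower-order terms coming from $a(t;m,m)$, deduce the $L^2(V)$ bound on $m$, and then read off the $L^2(V^*)$ bound on $m_t$ from the equation.

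The only real difference is cosmetic. The paper does not invoke Gronwall; instead it makes the exponential substitution $m(t)=\gamma(t)\exp(c_2 t)$ with $c_2$ chosen so that the extra $c_2\|\gamma\|_{L^2(H)}^2$ term on the left exactly cancels the lower-order contribution, yielding directly $\|\gamma\|_{L^2(V)}\le \tfrac{2}{\epsilon}\|q\|_{L^2(Q_0)}$ and hence $\|m\|_{L^2(V)}\le \tfrac{2}{\epsilon}\exp(c_2 T)\|q\|_{L^2(Q_0)}$. (The paper also splits into the trivial case $u=0$ and the case $u\neq 0$, and re-derives the bound on $a(t;m,m)$ rather than quoting \eqref{5-8}.) Your use of Gronwall with $m(0)=0$ is the standard equivalent of this weight trick, and quoting \eqref{5-8} rather than redoing the computation is a legitimate and slightly tidier shortcut; both routes give the same constant $c_1$ up to inessential numerical factors depending on $\epsilon$, $|k|$, $T$ and $\|y\|_{C(H)}$.
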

\begin{proof}
If $u=0$ then $(m, q)\in \ker e'(y, \omega)$ lead to $m(0)=0$ and
\begin{equation*}
\label{6-13}
(m_t, \xi)+\int_0^T\int_\Omega(\epsilon m_x\xi_x-kv\xi_x-Bq\xi)dxdt=0.
\end{equation*}
for all $\xi\in L^2(V)$. In particular, the choice $\xi=m$ implies that
\begin{equation*}
\label{6-14}
\|m\|_{L^2(V)}\leq \frac1\epsilon\|q\|_{L^2(Q_0)}.
\end{equation*}
where we used
\begin{align*}
-k\int_0^T\int_\Omega vm_xdxdt&=k\int_0^T\int_\Omega (1-\partial^2)^{-1}m_x mdxdt
\\
&=k\int_0^T\int_\Omega m_x (1-\partial^2)mdxdt=0.
\end{align*}
Thus, we conclude
\begin{eqnarray*}
\label{6-15}
\|m_t\|_{L^2(V^*)}&=\underset{\|\xi\|_{L^2(V)=1}}\sup\int_0^T\int_\Omega(\epsilon m_x\xi_x-kv\xi_x-Bq\xi)dxdt
\nonumber
\\
\leq(\epsilon+|k|)\|m\|_{L^2(V)}+\|q\|_{L^2(Q_0)}
\nonumber
\\
\leq(2+ \frac{|k|}\epsilon)\|q\|_{L^2(Q_0)}
\end{eqnarray*}

If $u\neq0$ then $(m, q)\in \ker e'(y, \omega)$ lead to $m(0)=0$ and
\begin{align}
\label{6-16}
(m_t, \xi)+\int_0^T\int_\Omega(\epsilon m_x\xi_x-&2uvy\xi_x+2u_xv_xy\xi_x
\nonumber
\\
-&u^2m\xi_x+u^2_xm\xi_x-kv\xi_x-Bq\xi)dxdt=0
\end{align}
As in the proof of Theorem \ref{thm4.1}, let
\begin{equation}
\label{6-17}
m(\tau)=\gamma(\tau)\exp(c_2\tau)
\end{equation}
from
\begin{equation*}
m(\tau)=v(\tau)-v_{xx}(\tau)
\end{equation*}
we have
\begin{equation*}
\label{6-18}
v(\tau)=\eta(\tau)\exp(c_2\tau)
\end{equation*}
and
\begin{equation*}
\label{6-19}
\eta(\tau)=(1-\partial^2)^{-1}\gamma(\tau)=G*\gamma(\tau)
\end{equation*}
Choose $\xi=m$ in (\ref{6-16}), we obtain
\begin{align*}
\label{6-20}
&c_2\|\gamma\|^2_{L^2(H)}+\epsilon\|\gamma\|^2_{L^2(V)}
\nonumber
\\
\leq&\|q\|_{L^2(Q_0)}\|\gamma\|_{L^2(V)}
\nonumber
\\
\qquad\qquad&+\int_0^T\int_\Omega(-2u\eta y\gamma_x+2u_x\eta_xy\gamma_x-u^2\gamma\gamma_x+u^2_x\gamma\gamma_x-k\eta\gamma_x)dxdt
\nonumber
\\
\leq&\|q\|_{L^2(Q_0)}\|\gamma\|_{L^2(V)}
\nonumber
\\
\qquad\qquad&+\int_0^T(2\|u\|_{L^\infty}\|\eta\|_{L^\infty}\|y\|_H\|\gamma_x\|_H+2\|u_x\|_{L^\infty}\|\eta_x\|_{L^\infty}\|y\|_H\|\gamma_x\|_H
\nonumber
\\
\qquad\qquad&
+\|u\|_{L^\infty}^2\|y\|_H\|\gamma_x\|_H+\|u_x\|_{L^\infty}^2\|y\|_H\|\gamma_x\|_H)dt
\nonumber
\\
\leq&\|q\|_{L^2(Q_0)}\|\gamma\|_{L^2(V)}+6\int_0^T(\|y\|^2_H\|\gamma\|_H\|\gamma\|_V)dt
\nonumber
\\
\leq&\|q\|_{L^2(Q_0)}\|\gamma\|_{L^2(V)}+\frac\epsilon2\|\gamma\|^2_{L^2(V)}+\frac1{12\epsilon}\|y\|^2_{C(H)}\|\gamma\|_{L^2(H)}
\end{align*}
where $c_2=\frac1{12\epsilon}\|y\|^2_{C(H)}$
Then we obtain
\begin{equation}
\label{6-21}
\|\gamma\|_{L^2(V)}\leq \frac2\epsilon\|q\|_{L^2(Q_0)}
\end{equation}
Combining (\ref{6-21}) with (\ref{6-17}) we have
\begin{equation}
\label{6-22}
\|m\|_{L^2(V)}\leq \frac2\epsilon\exp(c_2T)\|q\|_{L^2(Q_0)}
\end{equation}
From (\ref{6-16}) and (\ref{6-22}) we have
\begin{align}
\label{6-23}
&\|m_t\|_{L^2(V^*)}
\nonumber
\\
=&\bigg|\int_0^T\int_\Omega(\epsilon m_x\xi_x-2uvy\xi_x+2u_xv_xy\xi_x
\nonumber
\\
&\qquad\qquad-u^2m\xi_x+u^2_xm\xi_x-kv\xi_x-Bq\xi)dxdt\bigg|+\|q\|_{L^2(Q_0)}
\nonumber
\\
\leq&\bigg(\epsilon+6\|y\|_{C(H)}\bigg)\frac2\epsilon\exp(c_2T)\|q\|_{L^2(Q_0)}+\|q\|_{L^2(Q_0)}
\nonumber
\\
=&\bigg((\epsilon+6\|y\|_{C(H)})\frac2\epsilon\exp(c_2T)+1\bigg)\|q\|_{L^2(Q_0)}.
\end{align}
where we used $\|\xi\|_H\leq\|\xi\|_V$ and $\|m\|_H\leq\|m\|_V$.

From (\ref{6-22}) and (\ref{6-23}), we have
\begin{equation}
\label{6-24}
c_1=\bigg((\epsilon+6\|y\|_{C(H)})\frac2\epsilon\exp(c_2T)+1\bigg)^2+\frac4{\epsilon^2}\exp(2c_2T)
\end{equation}
\end{proof}

\begin{theorem}
Let $c_0, c_1$ and $c_2$ be given in (\ref{6-8}),(\ref{6-24}) and (\ref{6-27}), respectively.

(1)Let $S=W(V), C=I$ and suppose
$$
\|y^*\|_{C(H)}\|y^*-z\|_{L^2(H)}<\frac{3\epsilon}{4C_1}\exp(-c_0T)
$$
then $L''(y^*, \omega^*, \lambda^*, \mu^*)$ is coercive on the whole space $X$.

(2)Let $S=L^2(H)$ and $C$ be the injection of $W(V)$ into $L^2(H)$. If
$$
\|y^*\|_{C(H)}\|C^\star(Cy^*-z)\|_{L^2(H)}<\frac{3\sigma\epsilon}{8CC_1}\exp(-c_0T)
$$
then $L''(y^*, \omega^*, \lambda^*, \mu^*)$ is coercive on the kernel of $e'(y^*, \omega^*)$.
\end{theorem}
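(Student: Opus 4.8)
The plan is to start from the explicit form of the second Fr\'echet derivative of the augmented Lagrangian at the optimal point $(y^*,\omega^*,\lambda^*,\mu^*)$ and to isolate its only indefinite term. Since $J$ is a quadratic functional, $e_2(y,\omega)=y(0)-y_0$ is affine in $y$ (so $e_2''=0$ and the multiplier $\mu^*$ does not enter the second derivative), and $e(y^*,\omega^*)=0$ annihilates the term $c\,(e(y^*,\omega^*),e''(y^*,\omega^*)[(m,q)]^2)_Y$, one obtains for every $(m,q)\in X$
\begin{align*}
L_c''(y^*,\omega^*,\lambda^*,\mu^*)[(m,q)]^2
=&\ \|Cm\|_S^2+\delta\|q\|_{L^2(Q_0)}^2
\\
&+\big(e_1''(y^*,\omega^*)[(m,q)]^2,\lambda^*\big)_{L^2(V)}+c\,\|e'(y^*,\omega^*)(m,q)\|_Y^2 .
\end{align*}
The terms on the first line and the $c$-term are nonnegative, so everything hinges on estimating the indefinite pairing with $\lambda^*$.

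Next I would expand $e_1''(y^*,\omega^*)[(m,q)]^2$ by differentiating the nonlinear part $(u^2-u_x^2)y_x+2u_xy^2$ twice; the linear part $-\epsilon y_{xx}+ku_x-B\omega$ contributes nothing, and since $u=(1-\partial_x^2)^{-1}y=p*y$ is linear in $y$, the outcome — after the same integrations by parts used in Lemma \ref{lem5.1} to derive $e'$ — is a finite combination of terms that are quadratic in $m$ (through $m$ and $v:=p*m$) and linear in $y^*$ (through $u^*$, $u^*_x$, $y^*$, $y^*_x$), each possibly smoothed by one factor $(1-\partial_x^2)^{-1}$. To estimate $\big(e_1''(y^*,\omega^*)[(m,q)]^2,\lambda^*\big)_{L^2(V)}$ I would integrate by parts so that no derivative lands on $y^*$ (only $H$-regular in $x$) nor on the highest-order factor, and then use H\"older's inequality together with $\|u^*\|_{L^\infty},\|u^*_x\|_{L^\infty}\le C\|y^*\|_H$ and $\|v\|_{L^\infty},\|v_x\|_{L^\infty}\le C\|m\|_H$ (from $\|p\|_{L^1}=\|p_x\|_{L^1}=1$; see \eqref{3-16}, \eqref{5-5}--\eqref{5-6}), the fact that $\lambda^*\in L^2(V)$ supplies only $\|\lambda^*\|_{L^2(H)}$ and $\|\lambda^*_x\|_{L^2(H)}$, and the embedding $\|m\|_{C(H)}\le c_E\|m\|_{W(V)}$. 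This should give
\[
\big|\big(e_1''(y^*,\omega^*)[(m,q)]^2,\lambda^*\big)_{L^2(V)}\big|\le C\,\|y^*\|_{C(H)}\,\|\lambda^*\|_{L^2(V)}\,\|m\|_{W(V)}^2,
\]
after which Lemma \ref{lem6.1}, bounding $\|\lambda^*\|_{L^2(V)}$ in terms of $\exp(c_0T)\,\|C^\star(Cy^*-z)\|$, shows the indefinite term is $\le C_1\exp(c_0T)\,\|y^*\|_{C(H)}\,\|C^\star(Cy^*-z)\|\,\|m\|_{W(V)}^2$ once the absolute constants, $\tfrac{4}{3\epsilon}$ and $c_E$ are absorbed into $C_1$.

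For part (1), $S=W(V)$ and $C=I$ make the positive part satisfy $\|m\|_{W(V)}^2+\delta\|q\|_{L^2(Q_0)}^2+c\,\|e'(y^*,\omega^*)(m,q)\|_Y^2\ge\min\{1,\delta\}\,\|(m,q)\|_X^2$, and $C^\star(Cy^*-z)=y^*-z$; hence under $\|y^*\|_{C(H)}\|y^*-z\|_{L^2(H)}<\tfrac{3\epsilon}{4C_1}\exp(-c_0T)$ the indefinite term is strictly dominated by the positive part, so $L_c''[(m,q)]^2\ge\alpha\|(m,q)\|_X^2$ for some $\alpha>0$ and every $c\ge0$, i.e.\ $L_c''$ is coercive on all of $X$. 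For part (2), $S=L^2(H)$ with $C$ the injection $W(V)\hookrightarrow L^2(H)$ leaves the positive part controlling only $\|m\|_{L^2(H)}^2+\delta\|q\|_{L^2(Q_0)}^2$, which does not dominate $\|m\|_{W(V)}^2$; this is why one passes to $\ker e'(y^*,\omega^*)$, where Lemma \ref{lem6.2} gives $\|m\|_{W(V)}^2\le c_1\|q\|_{L^2(Q_0)}^2$, hence $\delta\|q\|_{L^2(Q_0)}^2\ge\tfrac{\delta}{2\max\{1,c_1\}}\|(m,q)\|_X^2$, and the $c$-term vanishes on the kernel. Feeding $\|m\|_{W(V)}^2\le c_1\|q\|_{L^2(Q_0)}^2$ back into the indefinite estimate, re-expressing $\|\lambda^*\|$ also through the optimality relation $\lambda^*=\sigma\omega^*$ on $Q_0$ of Theorem \ref{thm5.1} (the source of the constant $\sigma$), and invoking $\|y^*\|_{C(H)}\|C^\star(Cy^*-z)\|_{L^2(H)}<\tfrac{3\sigma\epsilon}{8CC_1}\exp(-c_0T)$ make the indefinite term $\le\tfrac{\delta}{4\max\{1,c_1\}}\|(m,q)\|_X^2$, so $L_c''[(m,q)]^2\ge\tfrac{\delta}{4\max\{1,c_1\}}\|(m,q)\|_X^2$ on $\ker e'(y^*,\omega^*)$. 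I expect the main obstacle to be precisely the integration-by-parts bookkeeping for $\big(e_1''[(m,q)]^2,\lambda^*\big)_{L^2(V)}$: distributing the derivatives so that each factor needs no more than the regularity $y^*\in C(H)$, $m\in W(V)$, $\lambda^*\in L^2(V)$ actually provides, and so that the constant that emerges is exactly the $C_1$ used in the hypotheses.
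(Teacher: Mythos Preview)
Your plan matches the paper's argument almost exactly: compute $L''$ as the quadratic form in $(m,q)$ with the only indefinite piece being $\int_0^T\!\int_\Omega b(y^*,m,\lambda^*)\,dx\,dt$ (the paper writes it explicitly as $-2v^2y^*\lambda^*_x-4u^*vm\lambda^*_x+2v_x^2y^*\lambda^*_x+4u^*_xv_xm\lambda^*_x$), estimate that piece by $C\|y^*\|_{C(H)}\|\lambda^*\|_{L^2(V)}\|m\|_{W(V)}^2$ via the $L^\infty$ bounds on $u^*,u^*_x,v,v_x$ and the embedding $W(V)\hookrightarrow C(H)$, and then feed in Lemma~\ref{lem6.1} for part~(1) and Lemmas~\ref{lem6.1} and~\ref{lem6.2} for part~(2).

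One correction for part~(2): the constant $\sigma$ in the hypothesis has nothing to do with the optimality relation $\lambda^*=\sigma\omega^*$ on $Q_0$, and you should not try to re-express $\|\lambda^*\|_{L^2(V)}$ through it (that relation holds only on $Q_0$ and gives no global $L^2(V)$ control). In the paper, $\sigma$ is simply the regularization parameter in front of $\|q\|_{L^2(Q_0)}^2$ (what you call $\delta$); the proof splits $\sigma\|q\|_{L^2(Q_0)}^2=\tfrac{\sigma}{2}\|q\|_{L^2(Q_0)}^2+\tfrac{\sigma}{2}\|q\|_{L^2(Q_0)}^2$ and uses Lemma~\ref{lem6.2} on the second half to produce $\tfrac{\sigma}{2c_1}\|m\|_{W(V)}^2$, which is what must dominate the indefinite term. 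With that adjustment your outline is the paper's proof.
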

\begin{proof}
(1)We observe that for all $(m, q)\in X$
\begin{align}
\label{6-25}
&\bigg(L''(y^*, \omega^*, \lambda^*, \mu^*)(m, q), (m, q)\bigg)_X
\nonumber
\\
=&\|m\|_{W(V)}^2+\sigma\|q\|^2_{L^2(Q_0)}+\int_0^T\int_\Omega b(y^*, m, \lambda^*)dxdt
\end{align}
where $y^*=u^*-u_{xx}^*, m^*=v^*-v_{xx}^*$ and $b(y^*, m, \lambda^*)$ is defined by
\begin{eqnarray*}
\label{6-26}
b(y^*, m, \lambda^*)=-2v^2y^*\lambda_x^*-4u^*vm\lambda_x^*+2v_x^2y^*\lambda_x^*+4u^*_xv_xm\lambda_x^*
\end{eqnarray*}
Then by H\"{o}lder inequality and Young's inequality , we obtain
\begin{align}
\label{6-27}
&\bigg|\int_0^T\int_\Omega b(y^*, m, \lambda^*)dxdt\bigg|
\nonumber
\\
\leq&\int_0^T\bigg(2\|v\|^2_{L^\infty}\|y^*\|_H\|\lambda_x^*\|_H+4\|u^*\|_{L^\infty}\|v\|_{L^\infty}\|m\|_H\|\lambda_x^*\|_H
\nonumber
\\
&\qquad+2\|v_x\|^2_{L^\infty}\|y^*\|_H\|\lambda_x^*\|_H+4\|u_x^*\|_{L^\infty}\|v_x\|_{L^\infty}\|m\|_H\|\lambda_x^*\|_H\bigg)dt
\nonumber
\\
\leq&\int_0^T(9\|m\|^2_H\|y^*\|_H\|\lambda^*\|_V)dt
\nonumber
\\
\leq&9\|\lambda^*\|_{L^2(V)}\|m\|^2_{C(H)}\|y^*\|_{C(H)}
\nonumber
\\
\leq&9 C\|\lambda^*\|_{L^2(V)}\|m\|^2_{W(V)}\|y^*\|_{C(H)}
\end{align}
From (\ref{6-1}), (\ref{6-25}) and (\ref{6-27}), we obtain
\begin{align}
\label{6-28}
&\bigg(L''(y^*, \omega^*, \lambda^*, \mu^*)(m, q), (m, q)\bigg)_X
\nonumber
\\
\geq&\|m\|_{W(V)}^2+\sigma\|q\|_{L^2(Q_0)}^2-9C\|\lambda^*\|_{L^2(V)}\|m\|^2_{W(V)}\|y^*\|_{C(H)}
\nonumber
\\
\geq&\bigg(1-\frac{4C_1}{3\epsilon}\exp(c_0T)\|y^*-z\|_{L^2(H)}\|y^*\|_{C(H)}\bigg)\|m\|_{W(V)}^2+\sigma\|q\|_{L^2(Q_0)}^2
\end{align}
Setting
\begin{equation*}
\kappa=\min\bigg(1-\frac{4C_1}{3\epsilon}\exp(c_0T)\|y^*-z\|_{L^2(H)}\|y^*\|_{C(H)}, \sigma\bigg)>0
\end{equation*}
We derive that
\begin{equation*}
(L''_c(y^*, \omega^*, \lambda^*, \mu^*)(m, q), (m, q))_X\geq \|(m, q)\|_X^2.
\end{equation*}

(2) Using lemma \ref{lem6.1} and \ref{lem6.2} , we obtain that for all $(m, q)\in \ker e'(y^*, \omega^*)$
\begin{align*}
&\bigg(L''(y^*, \omega^*, \lambda^*, \mu^*)(m, q), (m, q)\bigg)_X
\nonumber
\\
\geq&\|Cm\|_{L^2(H)}^2+\sigma\|q\|_{L^2(Q_0)}^2-9C\|\lambda^*\|_{L^2(V)}\|m\|^2_{W(V)}\|y^*\|_{C(H)}
\nonumber
\\
\geq&\sigma\|q\|_{L^2(Q_0)}^2-\frac{4C}{3\epsilon}\exp(c_0T)\|C^\star(Cy^*-z)\|_{L^2(H)}\|y^*\|_{C(H)}\|m\|^2_{W(V)}
\nonumber
\\
\geq&\frac\sigma2\|q\|_{L^2(Q_0)}^2+(\frac\sigma{2c_1}-\frac{4C}{3\epsilon}\exp(c_0T)\|C^\star(Cy^*-z)\|_{L^2(H)}\|y^*\|_{C(H)})\|m\|^2_{W(V)}
\end{align*}
Setting
\begin{equation*}
\kappa=\min\bigg(\frac\sigma{2c_1}-\frac{4C}{3\epsilon}\exp(c_0T)\|C^\star(Cy^*-z)\|_{L^2(H)}\|y^*\|_{C(H)}, \frac\sigma2)>0
\end{equation*}
We derive that
\begin{equation*}
(L''_c(y^*, \omega^*, \lambda^*, \mu^*)(m, q), (m, q))_X\geq \|(m, q)\|_X^2.
\end{equation*}
\end{proof}

\end{document}